\topskip \setlength{\parindent}{0pt} \setlength{\parskip}{5pt plus
\newtheorem{definition}{Definition}
\newtheorem{theorem}{Theorem}
\newtheorem{corollary}{Corollary}
\newtheorem{remark}{Remark}
\numberwithin{equation}{section} 
\numberwithin{definition}{section}
\numberwithin{theorem}{section}
\numberwithin{lemma}{section}
\numberwithin{corollary}{section}
\numberwithin{remark}{section}
\begin{document}
\title{\bf{New Extension of Unified Family of Apostol-Type of Polynomials and Numbers}}
\author[Desouky]{B. S. El-Desouky \corref{cor2}}
\author[Gomaa]{R. S. Gomaa}
\address{${ }^{1}$Department of Mathematics, Faculty of Science, Mansoura University, 35516 Mansoura, Egypt}

\cortext[cor2]{Corresponding author}\ead{b\_desouky@yahoo.com {(B. S. Desouky)}}
\begin{abstract}
The purpose of this paper is to introduce and investigate a new unification of unified family of Apostol-type polynomials and numbers based on results given in \cite{Sriva2} and \cite{Sriva3}. Also, we derive some properties for these polynomials and obtain some relationships between the Jacobi polynomials, Laguerre polynomials, Hermite polynomials, Stirling numbers and some other types of generalized polynomials.  

\noindent {\bf Key words}: Generalized Euler, Bernoulli and Genocchi polynomials; Stirling numbers; generalized Stirling numbers; Laguerre polynomials; Hermite polynomials; Jacobi polynomials.

\noindent {\bf AMS Subject Classification}: 05A10, 11B68, 11B73, 11B83, 11M06, 11M35, 33E20
\end{abstract}
\maketitle
\section{Introduction} 
The generalized Bernoulli polynomials $B^{(\alpha)}_{n}(x)$ of order $\alpha \in \mathbb{C}$ and the generalized Euler polynomials are defined by (see \cite{Sriva1}):
\begin{equation}\label{eq1}
\left(\frac{t}{e^{t}-1}\right)^{\alpha}e^{xt}=\sum\limits_{n=0}^{\infty}B^{(\alpha)}_{n}(x)\frac{t^{n}}{n!},\text{  } \left(\vert t \vert <2\pi; 1^{\alpha}:=1 \right)
\end{equation}
and
\begin{equation}\label{eq2}
\left(\frac{t}{e^{t}+1}\right)^{\alpha}e^{xt}=\sum\limits_{n=0}^{\infty}E^{(\alpha)}_{n}(x)\frac{t^{n}}{n!},  \text{  } \left(\vert t \vert <\pi; 1^{\alpha}:=1\right),
\end{equation}
where $\mathbb{C}$ denote set of complex numbers.\\
Recently, Luo and Srivastava \cite{Luo3} introduced the generalized Apostol-Bernoulli polynomials $B^{(\alpha)}_{n}(x;\lambda)$ and the generalized Apostol-Euler polynomials $E^{(\alpha)}_{n}(x;\lambda)$ as follows:
\begin{definition} (Luo and Srivastava \cite{Luo3}) The generalized Apostol-Bernoulli polynomials $B^{(\alpha)}_{n}(x;\lambda)$ of order $\alpha \in \mathbb{C}$ are defined by the generating function
$$
\left(\frac{t}{\lambda e^{t}-1}\right)^{\alpha}e^{xt}=\sum\limits_{n=0}^{\infty}B^{(\alpha)}_{n}(x;\lambda)\frac{t^{n}}{n!}$$
\begin{equation}\label{eq3}
 \left(\vert t \vert <2\pi \text{ when } \lambda=1; \vert t \vert < \vert \log\lambda \vert,\text{ when } \lambda \neq 1; 1^{\alpha}:=1 \right).
\end{equation}
\end{definition}
\begin{definition} (Luo \cite{Luo4}) The generalized Apostol-Euler polynomials $E^{(\alpha)}_{n}(x;\lambda)$ of order $\alpha \in \mathbb{C}$ are defined by the generating function
$$
\left(\frac{t}{\lambda e^{t}+1}\right)^{\alpha}e^{xt}=\sum\limits_{n=0}^{\infty}E^{(\alpha)}_{n}(x;\lambda)\frac{t^{n}}{n!}$$
\begin{equation}\label{eq4}
 \left(\vert t \vert <\pi \text{ when } \lambda=1; \vert t \vert < \vert \log(-\lambda) \vert,\text{ when } \lambda \neq 1; 1^{\alpha}:=1 \right).
\end{equation}
\end{definition}
Natalini and Bernardini \cite{Nat} defined the new generalization of Bernoulli polynomials in the following form.
\begin{definition} The generalized Bernoulli polynomials $ B^{[m-1]}_{n}(x)$, $m \in \mathbb N $, are defined, in a suitable neighbourhood of $t=0$ by means of generating function
\begin{equation}\label{eq5}
 \frac{t^{m}e^{xt}}{e^{t}-\sum\limits_{l=0}^{m-1}\frac{t^{l}}{l!}} =\sum\limits_{n=0}^{\infty}B^{[m-1]}_{n}(x)\frac{t^{n}}{n!}.
\end{equation}
\end{definition}
Recently, Tremblay et al. \cite{Tremblay} investigated a new class of generalized Apostol-Bernoulli polynomials. These are defined as follows.
\begin{definition} The generalized Apostol-Bernoulli polynomials $ B^{[m-1,\alpha]}_{n}(x;\lambda)$ of order $\alpha \in \mathbb{C}$, $m \in \mathbb N $, are defined, in a suitable neighbourhood of $t=0$ by means of generating function
\begin{equation}\label{eq6}
\left( \frac{t^{m}}{\lambda e^{t}-\sum\limits_{l=0}^{m-1}\frac{t^{l}}{l!}}\right)^{\alpha}e^{xt}=\sum\limits_{n=0}^{\infty}B^{[m-1,\alpha]}_{n}(x;\lambda)\frac{t^{n}}{n!}.
\end{equation}
\end{definition} 
Also, Sirvastava et al. \cite{Sriva2} introduced a new interesting class of Apostol-Bernoulli polynomials that are closely related to the new class that we present in this paper. They investigated the following form.
\begin{definition} Let $ a,b,c \in \mathbb{R^{+}}(a \neq b))$ and $n \in \mathbb{N}_{0} $. Then the generalized Bernoulli polynomials $\mathfrak{B}^{(\alpha)}_{n}(x;\lambda;a,b,c)$ of order $\alpha \in \mathbb{C}$ are defined by the following generating function:
$$
\left(\frac{t}{\lambda b^{t}-a^{t}}\right)^{\alpha}c^{xt}=\sum\limits_{n=0}^{\infty}\mathfrak{B}^{(\alpha)}_{n}(x;\lambda;a,b,c)\frac{t^{n}}{n!}$$
\begin{equation}\label{eq7}
 \left( \left \vert t\log\left(\frac{a}{b}\right) \right \vert < \vert \log\lambda \vert ;\text{ } 1^{\alpha}:=1 \right).
\end{equation}
\end{definition}
 In this sequel to the work by Sirvastava et al. \cite{Sriva3} introduced and investigated a similar generalization of the family of Euler polynomials defined as follws.
\begin{definition} Let $ a,b,c \in \mathbb{R^{+}}(a \neq b))$ and $n \in \mathbb{N}_{0} $. Then the generalized Euler polynomials $\mathfrak{E}^{(\alpha)}_{n}(x;\lambda;a,b,c)$ of order $\alpha \in \mathbb{C}$ are defined by the following generating function:
$$
\left(\frac{t}{\lambda b^{t}+a^{t}}\right)^{\alpha}c^{xt}=\sum\limits_{n=0}^{\infty}\mathfrak{E}^{(\alpha)}_{n}(x;\lambda;a,b,c)\frac{t^{n}}{n!}$$
\begin{equation}\label{eq8}
 \left( \left \vert t\log\left(\frac{a}{b}\right) \right \vert <\vert \log(-\lambda ) \vert ; \text{ }1^{\alpha}:=1 \right).
\end{equation}
\end{definition}
It is easy to see that setting $a=1$ and $b=c=e$ in \eqref{eq8} would lead to Apostol-Euler polynomials defined by \eqref{eq4}. The case where $\alpha=1$ has been studied by Luo et al. \cite{Luo1}.\\

In Section 2, we introduce the new extension of unified family of Apostol-type polynomials and numbers that are defined in \cite{Desouky2}. Also, we determine relation between some results given in \cite{Sriva1, Sriva2, Kurt1, Kurt2, Tremblay} and our results and introduce some new identities for polynomials defined in \cite{Desouky2} . In Section 3, we give some basic properties of   
the new unification of Apostol-type polynomials and numbers. Finally in Section 4, we introduce some relationships between the new unification of  Apostol-type polynomials and other known polynomials.
\section{ Unification of multiparameter Apostol-type polynomials and numbers}
\begin{definition} Let $ a,b,c \in \mathbb{R^{+}}(a \neq b))$, $n \in \mathbb{N}_{0} $ and $m \in \mathbb N $. Then the new unification of Apostol-type polynomials $M^{[m-1,r]}_{n}(x;k;a,b,c;\overline{\alpha}_{r})$ are defined, in a suitable neighbourhood of $t=0$ by means of generating function\\
$$F^{[m-1,r]}_{\overline{\alpha}_{r}}=\frac{t^{rkm}2^{rm(1-k)}c^{xt}}{\prod\limits_{i=0}^{r-1}\left(\alpha_{i}b^{t}-a^{t}\sum\limits_{\ell=0}^{m-1}\frac{t^{\ell}}{\ell!}\right)}=\sum\limits_{n=0}^{\infty} M^{[m-1,r]}_{n}(x;k;a,b,c;\overline{\alpha}_{r})\frac{t^{n}}{n!}$$
\\
$$
\Big( \left \vert t\log \left( \frac{b}{a}\right) \right \vert < 2\pi \text{ when }m=1 \text{ and } \alpha_{i} =1 ;\text{ } \left \vert t \log \left( \frac{b}{a}\right) \right \vert < \vert \log(\alpha_{i}) \vert 
$$
\begin{equation}\label{eq9}
\text{ when } m=1 \text{ and } \alpha_{i} \neq 1; \forall\; i=0,1,...,r-1 \Big), 
\end{equation}
where $k\in N_{0};r\in \mathbb{C};\overline{\alpha}_{r}=(\alpha_{0},\alpha
_{1},...,\alpha_{r-1}) \text{ is a sequence of complex numbers} \nonumber.$
\end{definition}

\begin{sidewaystable}[!htbp]
The generating function in \eqref{eq9} gives many types of polynomials as special cases, for example, see the following table
\caption{}
\begin{tabular}{|l|l l|}
\hline\hline
1&{\footnotesize setting  $k=1,\alpha_{i}=\lambda,$ $ i=0,1,...,r-1$, hence if $m=1$ in \eqref{eq9}} & \begin{tabular}{      l rr} {\footnotesize $M^{[0,r]}_{n}(x;1;a,b,c;\lambda)=\mathfrak{B}^{(r)}_{n}(x;\lambda;a,b,c)$ }& \\  ({\footnotesize generalized Bernoulli polynomials of order $r$, see \cite{Sriva3}})\end{tabular}
\\\hline
2&{\footnotesize setting  $k=0,\alpha_{i}=-\lambda,$ $ i=0,1,...,r-1$, hence if $m=1$ in \eqref{eq9}} & \begin{tabular}{l rr} {\footnotesize $M^{[0,r]}_{n}(x;0;a,b,c;-\lambda)=(-1)^{r}\mathfrak{E}^{(r)}_{n}(x;\lambda;a,b,c)$ }& \\  ({\footnotesize generalized Euler polynomials of order $r$, see \cite{Sriva3}})\end{tabular}
\\\hline
3&{\footnotesize setting  $\alpha_{i}=\beta,$ $ i=0,1,...,r-1,c=b$, hence if $m=1$ in \eqref{eq9}} & \begin{tabular}{ l rr} {\footnotesize $M^{[0,r]}_{n}(x;k;a,b,b;\beta)=y^{(r)}_{n,\beta}(x;k;a,b)$ }& \\  ({\footnotesize unification of Apostol-type polynomials of order $r$, see \cite{Ozden2}})\end{tabular}
\\\hline
4&\begin{tabular}{l rr}{\footnotesize setting  $k=1,t=t \ln a,x=\frac{x}{\ln a},\alpha_{i}=\lambda,$ $ i=0,1,...,r-1$},&\\{\footnotesize hence if $a=1,b=c^{\frac{1}{\ln a}}$ in \eqref{eq9}} \end{tabular} & \begin{tabular}{l rr } {\footnotesize $M^{[m-1,r]}_{n}(\frac{x}{\ln a};1,1,c^{\frac{1}{\ln a}},c;\lambda)=(\ln a)^{mr}B^{[m-1,r]}_{n}(x;c,a;\lambda)$ }& \\  ({\footnotesize generalized Bernoulli polynomials of order $r$, see \cite{Kurt2}})\end{tabular}
\\\hline
5&\begin{tabular}{l rr}{\footnotesize setting  $k=0,t=t \ln a, x=\frac{x}{\ln a},\alpha_{i}=-\lambda,$ $ i=0,1,...,r-1$},&\\{\footnotesize hence if $a=1,b=c^{\frac{1}{\ln a}}$ in \eqref{eq9}} \end{tabular} & \begin{tabular}{ l rr} {\footnotesize $M^{[m-1,r]}_{n}(\frac{x}{\ln a};0;1,c^{\frac{1}{\ln a}},c;-\lambda)=(-1)^{r}(\ln a)^{mr}E^{[m-1,r]}_{n}(x;c,a;\lambda)$ }& \\  ({\footnotesize generalized Euler polynomials of order $r$, see \cite{Kurt2}})\end{tabular}
\\\hline
6&\begin{tabular}{l rr}{\footnotesize setting  $k=1,\alpha_{i}=1,$ $ i=0,1,...,r-1,a=1,b=e,c=e$},&\\{\footnotesize hence if $r=1$ in \eqref{eq9}} \end{tabular} & {\footnotesize $M^{[m-1,1]}_{n}(x;1;1,e,e,1)=B^{[m-1]}_{n}(x)$ ( generalized Bernoulli polynomials, see \cite{Nat}})
\\\hline
7&\begin{tabular}{l rr}{\footnotesize setting $k=0,\alpha_{i}=-1,$ $ i=0,1,...,r-1,a=1,b=e,c=e$},&\\{\footnotesize hence if $r=1$ in \eqref{eq9}} \end{tabular} & {\footnotesize $M^{[m-1,1]}_{n}(x;0;1,e,e;-1)=-E^{[m-1]}_{n}(x)$ ( generalized Euler polynomials, see \cite{Nat}})
\\\hline
8& {\footnotesize setting $k=1,\alpha_{i}=1,$ $ i=0,1,...,r-1,a=1,b=e,c=e$ in \eqref{eq9}} & \begin{tabular}{l rr}{\footnotesize $M^{[m-1,r]}_{n}(x;1;1,e,e;1)=B^{[m-1,r]}_{n}(x)$ }&\\ {\footnotesize( generalized Bernoulli polynomials of order $r$, see \cite{Kurt1}}) \end{tabular}
\\\hline
9& {\footnotesize setting $k=0,\alpha_{i}=-1,$ $ i=0,1,...,r-1,a=1,b=e,c=e$ in \eqref{eq9}} & \begin{tabular}{l rr}{\footnotesize $M^{[m-1,r]}_{n}(x;0;1,e,e;-1)=(-1)^{r}E^{[m-1,r]}_{n}(x)$} &\\ {\footnotesize( generalized Euler polynomials of order $r$, see \cite{Kurt1}}) \end{tabular}
\\\hline
10& {\footnotesize setting $k=1,\alpha_{i}=-1,$ $ i=0,1,...,r-1,a=1,b=e,c=e$ in \eqref{eq9}} & \begin{tabular}{l rr}{\footnotesize $M^{[m-1,r]}_{n}(x;1;1,e,e;-1)=(-1)^{r}\left(\frac{1}{2}\right)^{rm}G^{[m-1,r]}_{n}(x)$} &\\ {\footnotesize( generalized Genocchi polynomials of order $r$, see \cite{Kurt1}}) \end{tabular}
\\\hline
11& {\footnotesize setting $k=1,\alpha_{i}=\lambda,$ $ i=0,1,...,r-1,a=1,b=e,c=e$ in \eqref{eq9}} & \begin{tabular}{l rr}{\footnotesize $M^{[m-1,r]}_{n}(x;1;1,e,e;\lambda)=B^{[m-1,r]}_{n}(x;\lambda)$ }&\\ {\footnotesize( generalized Apostol-Bernoulli polynomials of order $r$, see \cite{Tremblay}}) \end{tabular}
\\\hline
12& {\footnotesize setting $k=0,\alpha_{i}=-\lambda,$ $ i=0,1,...,r-1,a=1,b=e,c=e$ in \eqref{eq9}} & \begin{tabular}{l rr}{\footnotesize $M^{[m-1,r]}_{n}(x;0;1,e,e;-\lambda)=(-1)^{r}E^{[m-1,r]}_{n}(x;\lambda)$} &\\ {\footnotesize( generalized Apostol-Euler polynomials of order $r$, see \cite{Tremblay}}) \end{tabular}
\\\hline
13& {\footnotesize setting $m=1,a=1,b=e,c=e$ in \eqref{eq9}}&\begin{tabular}{l rr} { \footnotesize $M^{[0,r]}_{n}(x;k;1,e,e;-\overline{\alpha}_{r})=M^{(r)}_{n}(x;k;\overline{\alpha}_{r})$}&\\
{\footnotesize (a new unified family of generalized Apostol-Euler, Bernoulli}&\\
{\footnotesize and Genocchi polynomials, see \cite{Desouky2}})
\end{tabular}
\\\hline\hline
\end{tabular}
\end{sidewaystable}
\newpage
\begin{remark} If we set $x=0$ in \eqref{eq9}, then we obtain the new unification of multiparameter Apostol-type numbers, as 
\begin{equation}\label{eq10}
M^{[m-1,r]}_{n}(0;k;a,b,c;\overline{\alpha}_{r})=M^{[m-1,r]}_{n}(k;a,b,c;\overline{\alpha}_{r}).
\end{equation}
\end{remark}
\begin{remark} From $\textbf{No.13}$ in $\textbf{Table1}$ and \cite[\textbf{Table1}]{Desouky2}, we can obtain the polynomials and the numbers given in \cite{Apostol, Dere, Karande, Luo2, Ozden2}. 
\end{remark}
\section{ ٍSome basic properties for the polynomial $ M^{[m-1,r]}_{n}(x;k;a,b,c;\overline{\alpha}_{r})$}
\begin{theorem} Let $ a,b,c \in \mathbb{R^{+}}(a \neq b))$ and $x \mathbb \in {R} $. Then
{\footnotesize  \begin{eqnarray}
 M^{[m-1,r]}_{n}(x+y;k;a,b,c;\overline{\alpha}_{r})&=& \sum\limits_{l=0}^{n}\binom{n}{l}x^{n-l}(\ln c)^{n-l}M^{[m-1,r]}_{l}(y;k;a,b,c;\overline{\alpha}_{r}). \label{eq11}\\
M^{[m-1,r]}_{n}(x+r;k;a,b,c;\overline{\alpha}_{r})&=&M^{[m-1,r]}_{n}\left(x;k;\frac{a}{c},\frac{b}{c},c;\overline{\alpha}_{r}\right). \label{eq12}
\end{eqnarray}}
\end{theorem}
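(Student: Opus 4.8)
\section*{Proof proposal for the final theorem}

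The plan is to prove both identities by direct manipulation of the generating function $F^{[m-1,r]}_{\overline{\alpha}_{r}}$ in \eqref{eq9}, followed by comparison of the coefficients of $t^{n}/n!$ on the two sides. All the rearrangements below are carried out inside the suitable neighbourhood of $t=0$ in which the series in \eqref{eq9} converges absolutely, which legitimises the Cauchy products and coefficient extractions.

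For \eqref{eq11}, I would start from the remark that replacing $x$ by $x+y$ in \eqref{eq9} changes only the factor $c^{xt}$ in the numerator, which becomes $c^{(x+y)t}=c^{xt}\,c^{yt}=e^{xt\ln c}\,c^{yt}$. Therefore
$$
\sum_{n=0}^{\infty}M^{[m-1,r]}_{n}(x+y;k;a,b,c;\overline{\alpha}_{r})\frac{t^{n}}{n!}
=\left(\sum_{j=0}^{\infty}\frac{(x\ln c)^{j}}{j!}\,t^{j}\right)\left(\sum_{l=0}^{\infty}M^{[m-1,r]}_{l}(y;k;a,b,c;\overline{\alpha}_{r})\frac{t^{l}}{l!}\right).
$$
Forming the Cauchy product of the two power series on the right, the coefficient of $t^{n}/n!$ equals $\sum_{l=0}^{n}\binom{n}{l}(x\ln c)^{n-l}M^{[m-1,r]}_{l}(y;k;a,b,c;\overline{\alpha}_{r})$, and equating this with the coefficient on the left gives \eqref{eq11}.

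For \eqref{eq12}, I would again substitute $x\mapsto x+r$ in \eqref{eq9}; now the numerator acquires the extra factor $c^{rt}$. The key step is to split $c^{rt}=\prod_{i=0}^{r-1}c^{t}$ into $r$ identical factors and absorb one copy of $c^{t}$ into the $i$-th factor of the denominator product, so that
$$
\alpha_{i}b^{t}-a^{t}\sum_{\ell=0}^{m-1}\frac{t^{\ell}}{\ell!}\ \longmapsto\ \alpha_{i}\left(\frac{b}{c}\right)^{t}-\left(\frac{a}{c}\right)^{t}\sum_{\ell=0}^{m-1}\frac{t^{\ell}}{\ell!},
$$
while the prefactor $t^{rkm}2^{rm(1-k)}$ and the surviving $c^{xt}$ are untouched. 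After this substitution the right-hand side is exactly $F^{[m-1,r]}_{\overline{\alpha}_{r}}$ evaluated at $x$ with $a,b$ replaced by $a/c,b/c$ (and $c$ unchanged); since $a\neq b$ forces $a/c\neq b/c$, this is again an admissible instance of \eqref{eq9}. Comparing the coefficients of $t^{n}/n!$ then yields \eqref{eq12}.

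Neither part presents a real obstacle. The only points needing care are the bookkeeping in distributing the $r$ copies of $c^{t}$ across the denominator product in \eqref{eq12} — which implicitly uses that the index $r$ in $\prod_{i=0}^{r-1}$ is a positive integer — and the justification of the series rearrangement (Cauchy product) in \eqref{eq11}, which is immediate from the absolute convergence of the defining expansion near $t=0$.
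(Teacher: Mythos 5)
Your proposal is correct and follows essentially the same route as the paper: for \eqref{eq11} you split $c^{(x+y)t}$ into an exponential series times the defining generating function and take the Cauchy product (you expand the $x$-factor and keep $M_l(y;\cdot)$, which matches the stated form directly, while the paper expands the $y$-factor — equivalent by the symmetry of $x+y$), and for \eqref{eq12} you absorb $c^{rt}$ factorwise into the denominator product exactly as the paper does.
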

\begin{proof} For the first equation, from \eqref{eq9}
 \begin{eqnarray*}
 \sum\limits_{n=0}^{\infty} M^{[m-1,r]}_{n}(x+y;k;a,b,c;\overline{\alpha}_{r})\frac{t^{n}}{n!}&=&\frac{t^{rkm}2^{rm(1-k)}c^{xt}}{\prod\limits_{i=0}^{r-1}\left(\alpha_{i}b^{t}-a^{t}\sum\limits_{\ell=0}^{m-1}\frac{t^{\ell}}{\ell!}\right)}c^{yt}\\
& =&\sum\limits_{j=0}^{\infty}\frac{(ty \ln c)^{j}}{j!}\sum\limits_{l=0}^{\infty}M^{[m-1,r]}_{l}(x;k;a,b,c;\overline{\alpha}_{r})\frac{t^{l}}{l!},
 \end{eqnarray*}
   using Cauchy product rule, we can easily obtain \eqref{eq11}.\\
   For the second equation \eqref{eq12}, from \eqref{eq9}
  \begin{eqnarray*}
 \sum\limits_{n=0}^{\infty} M^{[m-1,r]}_{n}(x+r;k;a,b,c;\overline{\alpha}_{r})\frac{t^{n}}{n!}&=&\frac{t^{rkm}2^{rm(1-k)}}{\prod\limits_{i=0}^{r-1}\left(\alpha_{i}\left(\frac{b}{c}\right)^{t}-\left(\frac{a}{c}\right)^{t}\sum\limits_{\ell=0}^{m-1}\frac{t^{\ell}}{\ell!}\right)}c^{xt}\\
& =&\sum\limits_{n=0}^{\infty} M^{[m-1,r]}_{n}(x;k;\frac{a}{c},\frac{b}{c},c;\overline{\alpha}_{r})\frac{t^{n}}{n!}.
 \end{eqnarray*} 
 Equating coefficient of $\frac{t^{n}}{n!}$ on both sides, yields \eqref{eq12}. 
 \end{proof}  
 \begin{corollary} If $y=0$ in \eqref{eq11}, we have
  \begin{eqnarray}
 M^{[m-1,r]}_{n}(x;k;a,b,c;\overline{\alpha}_{r})&=& \sum\limits_{\ell=0}^{n}\binom{n}{\ell}x^{n-\ell}(\ln c)^{n-\ell} M^{[m-1,r]}_{\ell}(k;a,b,c;\overline{\alpha}_{r}) \label{eq13}\\
 &=& \sum\limits_{\ell=0}^{n}\binom{n}{n-\ell}x^{\ell}(\ln c)^{\ell} M^{[m-1,r]}_{n-\ell}(k;a,b,c;\overline{\alpha}_{r}) \label{eq14}.
 \end{eqnarray}
 \end{corollary}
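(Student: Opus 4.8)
The plan is to obtain both displayed identities by specializing the addition formula \eqref{eq11} at $y=0$. First I would put $y=0$ in \eqref{eq11}; this replaces $M^{[m-1,r]}_{l}(y;k;a,b,c;\overline{\alpha}_{r})$ on the right-hand side by $M^{[m-1,r]}_{l}(0;k;a,b,c;\overline{\alpha}_{r})$. Invoking the notational convention recorded in \eqref{eq10}, namely $M^{[m-1,r]}_{l}(0;k;a,b,c;\overline{\alpha}_{r})=M^{[m-1,r]}_{l}(k;a,b,c;\overline{\alpha}_{r})$, turns the right-hand side into $\sum_{\ell=0}^{n}\binom{n}{\ell}x^{n-\ell}(\ln c)^{n-\ell}M^{[m-1,r]}_{\ell}(k;a,b,c;\overline{\alpha}_{r})$, which is precisely \eqref{eq13}.

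For \eqref{eq14} I would carry out the change of summation index $\ell\mapsto n-\ell$ in \eqref{eq13}. As $\ell$ ranges over $\{0,1,\dots,n\}$ so does $n-\ell$; under this substitution the factor $x^{n-\ell}(\ln c)^{n-\ell}$ becomes $x^{\ell}(\ln c)^{\ell}$, the number $M^{[m-1,r]}_{\ell}$ becomes $M^{[m-1,r]}_{n-\ell}$, and $\binom{n}{\ell}$ becomes $\binom{n}{n-\ell}$ (one may of course leave the binomial coefficient in this form or rewrite it as $\binom{n}{\ell}$, since the two are equal). Collecting these gives \eqref{eq14}.

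There is essentially no obstacle here: the statement is an immediate corollary of the theorem containing \eqref{eq11}. The only point deserving a moment of care is that the shift of the summation index must be applied consistently to \emph{every} $\ell$-dependent factor, so that the powers of $x$ and of $\ln c$ end up with exponent $\ell$ rather than $n-\ell$. As an alternative route that avoids \eqref{eq11} altogether, one could derive \eqref{eq13} straight from the generating function \eqref{eq9}: write $c^{xt}=\sum_{j\ge 0}(x\ln c)^{j}t^{j}/j!$, multiply by $\sum_{l\ge 0}M^{[m-1,r]}_{l}(k;a,b,c;\overline{\alpha}_{r})t^{l}/l!$, apply the Cauchy product rule, and compare coefficients of $t^{n}/n!$; this yields \eqref{eq13} directly, and \eqref{eq14} then follows by the same re-indexing as above.
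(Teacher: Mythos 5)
Your proposal is correct and follows exactly the route the paper intends: set $y=0$ in \eqref{eq11}, use the convention \eqref{eq10} to identify $M^{[m-1,r]}_{\ell}(0;k;a,b,c;\overline{\alpha}_{r})$ with the numbers $M^{[m-1,r]}_{\ell}(k;a,b,c;\overline{\alpha}_{r})$, and obtain \eqref{eq14} from \eqref{eq13} by the index change $\ell\mapsto n-\ell$. The alternative derivation directly from the generating function \eqref{eq9} is a fine (equivalent) remark but not needed.
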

 \begin{theorem} The following identity holds true, when $m=1$ and $\alpha_{i} \neq 0$ in \eqref{eq9} $ \text{  }\forall i=0,1,...,r-1$
{\footnotesize \begin{equation}\label{eq15}
 M^{[0,r]}_{n}(r-x;k;a,b,c;\overline{\alpha}_{r})=\frac{(-1)^{r(1-k)+n}}{\prod\limits_{i=0}^{r-1}\alpha_{i}}\sum\limits_{m=0}^{n}\binom{n}{m}\left(r \ln\left(\frac{ab}{c}\right)\right)^{n-m} M^{[0,r]}_{m}\left(x;k;a,b,c;\frac{1}{\overline{\alpha}_{r}}\right).
 \end{equation}}
 \end{theorem}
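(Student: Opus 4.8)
The plan is to run the classical complementary-argument trick at the level of generating functions: write down the generating function whose coefficients are $M^{[0,r]}_{n}(r-x;k;a,b,c;\overline{\alpha}_{r})$, perform the substitution $t\mapsto -t$, and recognise the outcome as the generating function for the polynomials with the reciprocal parameters $\tfrac{1}{\overline{\alpha}_{r}}$, twisted by an explicit exponential factor.

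First I would specialise \eqref{eq9} to $m=1$, where the inner sum $\sum_{\ell=0}^{m-1}t^{\ell}/\ell!$ collapses to $1$, so that
$$\sum_{n=0}^{\infty}M^{[0,r]}_{n}(r-x;k;a,b,c;\overline{\alpha}_{r})\frac{t^{n}}{n!}=\frac{t^{rk}2^{r(1-k)}c^{(r-x)t}}{\prod_{i=0}^{r-1}\left(\alpha_{i}b^{t}-a^{t}\right)}.$$
Then I would replace $t$ by $-t$. On the left this produces $\sum_{n\geq 0}(-1)^{n}M^{[0,r]}_{n}(r-x;\dots)t^{n}/n!$. On the right the key algebraic step is the factorisation of each denominator factor,
$$\alpha_{i}b^{-t}-a^{-t}=(ab)^{-t}\left(\alpha_{i}a^{t}-b^{t}\right)=-\,\alpha_{i}\,(ab)^{-t}\!\left(\tfrac{1}{\alpha_{i}}b^{t}-a^{t}\right),$$
which is where the hypothesis $\alpha_{i}\neq 0$ is used. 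Taking the product over $i=0,\dots,r-1$ extracts the constants $(-1)^{r}$, $\prod_{i}\alpha_{i}$ and $(ab)^{-rt}$, and leaves exactly the denominator $\prod_{i}\big(b^{t}/\alpha_{i}-a^{t}\big)$ that appears in the generating function for the parameter vector $\tfrac{1}{\overline{\alpha}_{r}}$. Combining $(-t)^{rk}=(-1)^{rk}t^{rk}$ with the surviving $(ab)^{rt}$ and with $c^{-(r-x)t}=c^{xt}c^{-rt}$ assembles the shift factor $\big(\tfrac{ab}{c}\big)^{rt}$, and since $(-1)^{rk}/(-1)^{r}=(-1)^{r(1-k)}$ one obtains
$$\sum_{n\geq 0}(-1)^{n}M^{[0,r]}_{n}(r-x;k;a,b,c;\overline{\alpha}_{r})\frac{t^{n}}{n!}=\frac{(-1)^{r(1-k)}}{\prod_{i=0}^{r-1}\alpha_{i}}\left(\frac{ab}{c}\right)^{rt}\sum_{n\geq 0}M^{[0,r]}_{n}\!\left(x;k;a,b,c;\tfrac{1}{\overline{\alpha}_{r}}\right)\frac{t^{n}}{n!}.$$

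Finally I would expand $\big(\tfrac{ab}{c}\big)^{rt}=\sum_{j\geq 0}\big(r\ln\tfrac{ab}{c}\big)^{j}t^{j}/j!$, multiply the two power series on the right by the Cauchy product rule (exactly as in the proof of Theorem 3.1), and compare the coefficients of $t^{n}/n!$ on the two sides; multiplying through by $(-1)^{n}$ yields \eqref{eq15}. The only delicate point is the bookkeeping of the three sources of sign — $(-1)^{rk}$ from $(-t)^{rk}$, $(-1)^{r}$ from the $r$-fold denominator, and $(-1)^{n}$ from $(-t)^{n}$ on the left — and checking that they collapse precisely to the exponent $r(1-k)+n$ in the statement; the remainder is the routine exponential-series and Cauchy-product manipulation.
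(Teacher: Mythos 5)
Your proposal is correct and follows essentially the same route as the paper's own proof: both rest on the factorisation $\alpha_{i}b^{t}-a^{t}=-\alpha_{i}\,a^{t}b^{t}\bigl(\tfrac{b^{-t}}{\alpha_{i}}-a^{-t}\bigr)$ (your version is the same identity after $t\mapsto-t$), extraction of $(-1)^{r}\prod_{i}\alpha_{i}$ and the factor $\bigl(\tfrac{ab}{c}\bigr)^{\pm rt}$, recognition of the reciprocal-parameter generating function, and a final exponential-series expansion with the Cauchy product. The only cosmetic difference is that you flip $t\mapsto-t$ on both sides (producing the $(-1)^{n}$ on the left) while the paper keeps $t$ and evaluates the reciprocal-parameter series at $-t$; the sign bookkeeping $(-1)^{rk}(-1)^{-r}(-1)^{n}=(-1)^{r(1-k)+n}$ is handled correctly in both.
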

 \begin{proof} From \eqref{eq9}
{\footnotesize \begin{align*}
\sum\limits_{n=0}^{\infty} M^{[0,r]}_{n}(r-x;k;a,b,c;\overline{\alpha}_{r})\frac{t^{n}}{n!}& =\frac{t^{rk}2^{r(1-k)}c^{(r-x)t}}{\prod\limits_{i=0}^{r-1}\left(\alpha_{i}b^{t}-a^{t}\right)}\\
& =\frac{(-1)^{r(1-k)}}{\left(b^{t}a^{t}\right)^{r}\prod\limits_{j=0}^{r-1}\alpha_{j}}\frac{(-t)^{rk}2^{r(1-k)}c^{-xt}}{\prod\limits_{i=0}^{r-1}\left(\frac{b^{-t}}{\alpha_{i}}-a^{-t}\right)}c^{rt}\\
& =\frac{(-1)^{r(1-k)}}{\prod\limits_{j=0}^{r-1}\alpha_{j}}\left(\frac{ba}{c}\right)^{-rt}\sum\limits_{m=0}^{\infty} M^{[0,r]}_{m}\left(x;k;a,b,c;\frac{1}{\overline{\alpha}_{r}}\right)\frac{(-t)^{m}}{m!}\\
& =\frac{(-1)^{r(1-k)}}{\prod\limits_{j=0}^{r-1}\alpha_{j}}\sum\limits_{\ell=0}^{\infty}\frac{\left(r \ln\left(\frac{ab}{c}\right)\right)^{\ell}}{\ell!}(-t)^{\ell}\sum\limits_{m=0}^{\infty} M^{[0,r]}_{m}\left(x;k;a,b,c;\frac{1}{\overline{\alpha}_{r}}\right)\frac{(-t)^{m}}{m!}.
\end{align*}}
 Hence, we can easily obtain \eqref{eq15}.
\end{proof}
\begin{remark} If we put $\alpha_{i}=\beta,i=0,1,...,r-1$, $c=b$ and $r=\upsilon$ in \eqref{eq15}, then it gives \cite[Eq. (34)]{Ozden2},
\begin{equation*}
 M^{[0,\upsilon]}_{n}(\upsilon-x;k;a,b,b;\beta)=\frac{(-1)^{\upsilon(1-k)+n}}{(\beta)^{\upsilon}}\sum\limits_{m=0}^{n}\binom{n}{m}\left((\upsilon \ln a\right)^{n-m} M^{[0,\upsilon]}_{m}\left(x;k;a,b,b;\beta^{-1}\right),
 \end{equation*}
 where $M^{[0,\upsilon]}_{m}\left(x;k;a,b,b;\beta^{-1}\right)$ is the unification of the Apostol-type polynomials.
\end{remark}
\begin{theorem} The unification of Apostol-type numbers satisfy
\begin{equation}\label{eq16}
M^{[m-1,r]}_{n}(k;a,b,c;\overline{\alpha}_{r})=\sum\limits_{l=0}^{n}\binom{n}{l} M^{[m-1,\ell]}_{l}(k;a,b,c;\overline{\alpha}_{\ell}) M^{[m-1,r-\ell]}_{n-l}(k;a,b,c;\overline{\alpha}_{r-\ell}).
\end{equation}
\end{theorem}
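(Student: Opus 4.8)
The plan is to read the claim off directly from the multiplicative structure of the generating function \eqref{eq9}. Fix a positive integer $r$ together with an integer $\ell$ satisfying $0\le\ell\le r$, and split the parameter sequence so that $\overline{\alpha}_{\ell}$ collects the first $\ell$ entries $(\alpha_{0},\dots,\alpha_{\ell-1})$ of $\overline{\alpha}_{r}$ and $\overline{\alpha}_{r-\ell}$ collects the remaining $r-\ell$ entries $(\alpha_{\ell},\dots,\alpha_{r-1})$; this is precisely the convention under which the right-hand side of \eqref{eq16} is to be read. First I would set $x=0$ in \eqref{eq9}, which by \eqref{eq10} removes the factor $c^{xt}$, and then break the product in the denominator of $F^{[m-1,r]}_{\overline{\alpha}_{r}}$ into its first $\ell$ factors and its last $r-\ell$ factors, simultaneously writing $t^{rkm}=t^{\ell km}\cdot t^{(r-\ell)km}$ and $2^{rm(1-k)}=2^{\ell m(1-k)}\cdot 2^{(r-\ell)m(1-k)}$. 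This produces the identity of generating functions
$$
F^{[m-1,r]}_{\overline{\alpha}_{r}}\big|_{x=0}
= F^{[m-1,\ell]}_{\overline{\alpha}_{\ell}}\big|_{x=0}\cdot F^{[m-1,r-\ell]}_{\overline{\alpha}_{r-\ell}}\big|_{x=0},
$$
all three being analytic near $t=0$ on the common domain of convergence described in \eqref{eq9}.

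Next I would replace each of the three factors by its power-series expansion coming from \eqref{eq9}--\eqref{eq10}, so that
$$
\sum\limits_{n=0}^{\infty} M^{[m-1,r]}_{n}(k;a,b,c;\overline{\alpha}_{r})\frac{t^{n}}{n!}
=\left(\sum\limits_{l=0}^{\infty} M^{[m-1,\ell]}_{l}(k;a,b,c;\overline{\alpha}_{\ell})\frac{t^{l}}{l!}\right)\left(\sum\limits_{j=0}^{\infty} M^{[m-1,r-\ell]}_{j}(k;a,b,c;\overline{\alpha}_{r-\ell})\frac{t^{j}}{j!}\right).
$$
Applying the Cauchy product rule on the right-hand side and equating coefficients of $t^{n}/n!$ on both sides then yields \eqref{eq16} at once.

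I do not expect a genuine obstacle here: the argument is a one-line generating-function factorization followed by the Cauchy product, exactly in the spirit of the proof of \eqref{eq11}. The only points that deserve to be stated carefully are that $r$ must be taken in $\mathbb{N}$ (so that the product over $r$ factors actually splits into blocks of sizes $\ell$ and $r-\ell$), that the two sub-sequences $\overline{\alpha}_{\ell}$ and $\overline{\alpha}_{r-\ell}$ are understood as the complementary blocks of $\overline{\alpha}_{r}$ described above, and that one works inside the common disc of convergence so that each of the three series genuinely represents the stated analytic function near $t=0$.
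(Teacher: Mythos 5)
Your proposal is correct and follows essentially the same route as the paper: set $x=0$ in \eqref{eq9}, factor the generating function by splitting the product over the first $\ell$ and remaining $r-\ell$ parameters (together with the powers of $t$ and $2$), and conclude by the Cauchy product rule and comparison of coefficients. The only difference is that you make explicit the block-splitting convention for $\overline{\alpha}_{\ell}$ and $\overline{\alpha}_{r-\ell}$, which the paper leaves implicit.
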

\begin{proof} When $x=0$ in \eqref{eq9}, we have
{\footnotesize \begin{eqnarray*}
\sum\limits_{n=0}^{\infty} M^{[m-1,r]}_{n}(k;a,b,c;\overline{\alpha}_{r})\frac{t^{n}}{n!}&=&\frac{t^{rkm}2^{rm(1-k)}}{\prod\limits_{i=0}^{r-1}\left(\alpha_{i}b^{t}-a^{t}\sum\limits_{\ell=0}^{m-1}\frac{t^{\ell}}{\ell!}\right)}\\
&=&\frac{t^{\ell km}2^{\ell m(1-k)}}{\prod\limits_{i=0}^{\ell-1}\left(\alpha_{i}b^{t}-a^{t}\sum\limits_{\ell=0}^{m-1}\frac{t^{\ell}}{\ell!}\right)}\frac{t^{(r-\ell)km}2^{(r-\ell)m(1-k)}}{\prod\limits_{i=\ell}^{r-1}\left(\alpha_{i}b^{t}-a^{t}\sum\limits_{\ell=0}^{m-1}\frac{t^{\ell}}{\ell!}\right)}\\
&=&\sum\limits_{\ell_{1}=0}^{\infty} M^{[m-1,\ell]}_{\ell_{1}}(k;a,b,c;\overline{\alpha}_{\ell})\frac{t^{\ell_{1}}}{\ell_{1}!}\sum\limits_{\ell_{2}=0}^{\infty} M^{[m-1,r-\ell]}_{\ell_{2}}(k;a,b,c;\overline{\alpha}_{r-\ell})\frac{t^{\ell_{2}}}{\ell_{2}!}.
\end{eqnarray*}}
Using Cauchy product rule, we obtain \eqref{eq16}.
\end{proof}
\begin{theorem} The following relationship holds true
 \begin{equation}\label{eq17}
\sum_{k_{1}+k_{2}+...+k_{\ell}=n}\prod\limits_{i=1}^{\ell}\frac{ M^{[m-1,r_{i}]}_{k_{i}}(x_{i};k;a,b,c;\overline{\alpha}_{r_{i}})}{k_{1}!k_{2}!...k_{\ell}!}=\frac{1}{n!} M^{[m-1,\vert \mathbf{r} \vert]}_{n}(\vert \mathbf{x} \vert;k;a,b,c;\overline{\alpha}_{\vert \mathbf{r} \vert}), 
\end{equation}
where $\vert \mathbf{r} \vert=r_{1}+r_{2}+...r_{\ell}$ and $\vert \mathbf{x} \vert=x_{1}+x_{2}+...+x_{\ell}$ and       
  {\footnotesize $\overline{\alpha}_{r_{i}}=\left(\alpha_{\sum_{j=1}^{i-1}r_{j}},\alpha_{\sum_{j=1}^{i-1}r_{j}+1},..., \alpha_{\sum_{j=1}^{i}r_{j}-1}\right)$, $i=\{1,2,...,\ell\}$}.
\end{theorem}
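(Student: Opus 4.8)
The plan is to prove \eqref{eq17} purely on the level of generating functions: multiply together $\ell$ copies of \eqref{eq9}, one for each index $i$, and then compare the coefficient of $t^{n}$ on the two sides. For $i\in\{1,2,\dots,\ell\}$ set $\sigma_{i-1}=\sum_{j=1}^{i-1}r_{j}$, so that the block in the statement is $\overline{\alpha}_{r_{i}}=(\alpha_{\sigma_{i-1}},\alpha_{\sigma_{i-1}+1},\dots,\alpha_{\sigma_{i}-1})$, and write
\[
F^{[m-1,r_{i}]}_{\overline{\alpha}_{r_{i}}}(x_{i})=\frac{t^{r_{i}km}2^{r_{i}m(1-k)}c^{x_{i}t}}{\prod\limits_{s=0}^{r_{i}-1}\left(\alpha_{\sigma_{i-1}+s}\,b^{t}-a^{t}\sum\limits_{\ell'=0}^{m-1}\frac{t^{\ell'}}{\ell'!}\right)}=\sum\limits_{k_{i}=0}^{\infty}M^{[m-1,r_{i}]}_{k_{i}}(x_{i};k;a,b,c;\overline{\alpha}_{r_{i}})\frac{t^{k_{i}}}{k_{i}!}.
\]

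First I would form the product $\prod_{i=1}^{\ell}F^{[m-1,r_{i}]}_{\overline{\alpha}_{r_{i}}}(x_{i})$ directly from the closed forms. In the numerator the powers of $t$, of $2$, and of $c$ simply add up, giving $t^{\vert\mathbf r\vert km}\,2^{\vert\mathbf r\vert m(1-k)}\,c^{\vert\mathbf x\vert t}$ with $\vert\mathbf r\vert=r_{1}+\dots+r_{\ell}$ and $\vert\mathbf x\vert=x_{1}+\dots+x_{\ell}$. In the denominator the ranges of the inner products stitch together: as $i$ runs from $1$ to $\ell$ and $s$ runs from $0$ to $r_{i}-1$, the index $\sigma_{i-1}+s$ runs through $0,1,\dots,\vert\mathbf r\vert-1$ exactly once, hence
\[
\prod_{i=1}^{\ell}\prod_{s=0}^{r_{i}-1}\left(\alpha_{\sigma_{i-1}+s}\,b^{t}-a^{t}\sum_{\ell'=0}^{m-1}\frac{t^{\ell'}}{\ell'!}\right)=\prod_{i=0}^{\vert\mathbf r\vert-1}\left(\alpha_{i}\,b^{t}-a^{t}\sum_{\ell'=0}^{m-1}\frac{t^{\ell'}}{\ell'!}\right),
\]
so that $\prod_{i=1}^{\ell}F^{[m-1,r_{i}]}_{\overline{\alpha}_{r_{i}}}(x_{i})=F^{[m-1,\vert\mathbf r\vert]}_{\overline{\alpha}_{\vert\mathbf r\vert}}(\vert\mathbf x\vert)=\sum_{n=0}^{\infty}M^{[m-1,\vert\mathbf r\vert]}_{n}(\vert\mathbf x\vert;k;a,b,c;\overline{\alpha}_{\vert\mathbf r\vert})\frac{t^{n}}{n!}$.

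Next I would compute the same product from the series side via the $\ell$-fold Cauchy product rule:
\[
\prod_{i=1}^{\ell}\sum_{k_{i}=0}^{\infty}M^{[m-1,r_{i}]}_{k_{i}}(x_{i};k;a,b,c;\overline{\alpha}_{r_{i}})\frac{t^{k_{i}}}{k_{i}!}=\sum_{n=0}^{\infty}\left(\sum_{k_{1}+\dots+k_{\ell}=n}\prod_{i=1}^{\ell}\frac{M^{[m-1,r_{i}]}_{k_{i}}(x_{i};k;a,b,c;\overline{\alpha}_{r_{i}})}{k_{i}!}\right)t^{n}.
\]
Equating the coefficient of $t^{n}$ in the two expressions for the product then yields \eqref{eq17} at once (the $1/n!$ on the right-hand side being exactly the coefficient attached to $M^{[m-1,\vert\mathbf r\vert]}_{n}$).

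I do not expect a genuinely hard step here; the proof is essentially bookkeeping. The two points that deserve care are: \emph{(i)} the combinatorial identity that the concatenation of the consecutive blocks $\overline{\alpha}_{r_{1}},\dots,\overline{\alpha}_{r_{\ell}}$ reproduces $\overline{\alpha}_{\vert\mathbf r\vert}=(\alpha_{0},\dots,\alpha_{\vert\mathbf r\vert-1})$, which is immediate from the indexing convention stated in the theorem; and \emph{(ii)} the justification for manipulating the generating functions — each $F^{[m-1,r_{i}]}_{\overline{\alpha}_{r_{i}}}$ and the combined $F^{[m-1,\vert\mathbf r\vert]}_{\overline{\alpha}_{\vert\mathbf r\vert}}$ is analytic in a common neighbourhood of $t=0$ under the hypotheses of \eqref{eq9}, so the multiplication and rearrangement are legitimate (and in the purely formal-power-series reading no convergence is needed at all). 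As elsewhere in the paper, $r_{i}$ is understood to be a nonnegative integer so that the products $\prod_{s=0}^{r_{i}-1}$ make sense.
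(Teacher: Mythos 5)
Your proposal is correct and follows essentially the same route as the paper: both arguments factor the generating function \eqref{eq9} for the parameters $\vert\mathbf r\vert,\vert\mathbf x\vert,\overline{\alpha}_{\vert\mathbf r\vert}$ into the product of the $\ell$ block generating functions (the concatenation of the blocks $\overline{\alpha}_{r_{1}},\dots,\overline{\alpha}_{r_{\ell}}$ giving $\overline{\alpha}_{\vert\mathbf r\vert}$) and then apply the $\ell$-fold Cauchy product and equate coefficients of $t^{n}$. The only difference is direction of presentation (you build the product up, the paper splits the combined function apart), plus your added remarks on analyticity and the formal-power-series reading, which the paper leaves implicit.
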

 \begin{proof} Starting with \eqref{eq9}, we get
{\scriptsize \begin{multline*}
\sum\limits_{n=0}^{\infty}\left(M^{[m-1,\vert r \vert]}_{n}(\vert x \vert;k;a,b,c;\overline{\alpha}_{\vert r \vert})\right)\frac{t^{n}}{n!}= \frac{t^{\vert r \vert km}2^{\vert r \vert m(1-k)}c^{\vert x \vert t}}{\prod\limits_{i=0}^{\vert r \vert -1}\left(\alpha_{i}b^{t}-a^{t}\sum\limits_{\ell=0}^{m-1}\frac{t^{\ell}}{\ell!}\right)}\\
\quad\quad\quad\quad\quad\;\;\;=\frac{t^{r_{1}km}2^{r_{1}m(1-k)}c^{x_{1}t}}{\prod\limits_{i=0}^{r_{1}-1}\left(\alpha_{i}b^{t}-a^{t}\sum\limits_{\ell=0}^{m-1}\frac{t^{\ell}}{\ell!}\right)}\frac{t^{r_{2}km}2^{r_{2}m(1-k)}c^{x_{2}t}}{\prod\limits_{i=r_{1}}^{r_{1}+r_{2}-1}\left(\alpha_{i}b^{t}-a^{t}\sum\limits_{\ell=0}^{m-1}\frac{t^{\ell}}{\ell!}\right)}...\frac{t^{r_{\ell}km}2^{r_{\ell}m(1-k)}c^{x_{\ell}t}}{\prod\limits_{i=r_{1}+r_{2}...+r_{\ell-1}}^{r_{1}+r_{2}+...+r_{\ell}-1}\left(\alpha_{i}b^{t}-a^{t}\sum\limits_{\ell=0}^{m-1}\frac{t^{\ell}}{\ell!}\right)}\\
=\sum\limits_{k_{1}=0}^{\infty} M^{[m-1,r_{1}]}_{k_{1}}(x_{1};k;a,b,c;\overline{\alpha}_{r_{1}})\frac{t^{k_{1}}}{k_{1}!}\sum\limits_{k_{2}=0}^{\infty} M^{[m-1,r_{2}]}_{k_{2}}(x_{2};k;a,b,c;\overline{\alpha}_{r_{2}})\frac{t^{k_{2}}}{k_{2}!}...\sum\limits_{k_{\ell}=0}^{\infty} M^{[m-1,r_{\ell}]}_{k_{1}}(x_{\ell};k;a,b,c;\overline{\alpha}_{r_{\ell}})\frac{t^{k_{\ell}}}{k_{\ell}!}
\end{multline*}}
Using Cauchy product rule on the right hand side of the last equation and equating coefficients of $t^{n}$ on both sides, yields \eqref{eq17}.
\end{proof}
Using $\textbf{No.13}$ in $\textbf{Table1}$, we obtain \textit{N\"{o}rlun$d^{,}$s} results, see \cite{Norlund} and  \textit{Carlit$z^{,}$s} generalizations, see \cite{Carlitz} by our approach in Theorem 3.5 and Theorem 3.6 as follows

\begin{theorem} For $ (\overline{\alpha}_{r})^{n}=\left(\alpha_{0}^{n},\alpha_{1}^{n},...,\alpha_{r-1}^{n}\right)$, we have
{\footnotesize \begin{equation}\label{eq18}
\prod\limits_{i=1}^{r}\sum\limits_{s_{i}=0}^{n-1}(\alpha_{i-1})^{s_{i}} M^{[0,r]}_{\ell} \left(x+\frac{\sum\limits_{i=1}^{r}s_{i}}{n};k;1,e,e; (\overline{\alpha}_{r})^{n}\right)=n^{rk-\ell} M^{[0,r]}_{\ell} \left(nx+;k;1,e,e;\overline{\alpha}_{r}\right).
\end{equation}}
{\footnotesize \begin{equation}\label{eq19}
\prod\limits_{i=1}^{r}\sum\limits_{s_{i}=0}^{n-1}(\alpha_{i-1})^{s_{i}} M^{[0,r]}_{r+\ell} \left(x+\frac{\sum\limits_{i=0}^{r}s_{i}}{n};k;1,e,e; (\overline{\alpha}_{r})^{n}\right)=n^{r(k-1)-\ell}\frac{(\ell+r)!}{\ell!} M^{[0,r]}_{\ell} \left(nx+;k-1;1,e,e;\overline{\alpha}_{r}\right).
\end{equation}}
\end{theorem}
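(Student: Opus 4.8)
The plan is to prove the two identities \eqref{eq18} and \eqref{eq19} by the same generating-function technique used in Theorems 3.5 and 3.6, starting from the specialization recorded as \textbf{No.13} in \textbf{Table1}, namely $a=1,b=c=e$, so that the generating function \eqref{eq9} collapses to
\[
\frac{t^{rkm}2^{rm(1-k)}e^{xt}}{\prod_{i=0}^{r-1}\bigl(\alpha_i e^{t}-\sum_{\ell=0}^{m-1}\tfrac{t^{\ell}}{\ell!}\bigr)}=\sum_{n=0}^{\infty}M^{[m-1,r]}_{n}(x;k;1,e,e;\overline{\alpha}_{r})\frac{t^{n}}{n!}.
\]
Since in \eqref{eq18}--\eqref{eq19} we only need the case $m=1$, the denominator factors become simply $\alpha_i e^{t}-1$, and the generating function is $t^{rk}2^{0}e^{xt}\big/\prod_{i=0}^{r-1}(\alpha_i e^{t}-1)$.

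First I would establish the multiplication-type formula \eqref{eq18}. The key algebraic identity is the finite geometric-sum factorization
\[
\frac{1}{\alpha e^{nu}-1}=\frac{1}{\alpha}\cdot\frac{1}{e^{nu}-\alpha^{-1}}
\quad\text{versus}\quad
\sum_{s=0}^{n-1}\alpha^{s}e^{su}=\frac{\alpha^{n}e^{nu}-1}{\alpha e^{u}-1},
\]
so that $\dfrac{1}{\alpha e^{u}-1}=\dfrac{1}{\alpha^{n}e^{nu}-1}\sum_{s=0}^{n-1}\alpha^{s}e^{su}$. Applying this to each of the $r$ factors $\alpha_{i-1}e^{u}-1$ and multiplying over $i=1,\dots,r$ gives
\[
\prod_{i=1}^{r}\frac{1}{\alpha_{i-1}e^{u}-1}
=\Bigl(\prod_{i=1}^{r}\sum_{s_i=0}^{n-1}\alpha_{i-1}^{s_i}e^{s_i u}\Bigr)\prod_{i=1}^{r}\frac{1}{\alpha_{i-1}^{n}e^{nu}-1}.
\]
Then I substitute $u=t/n$ into the generating function $G(u)=u^{rk}e^{xu}\prod_{i}(\alpha_{i-1}e^{u}-1)^{-1}$ for $M^{[0,r]}_{\ell}(x;k;1,e,e;\overline{\alpha}_r)$, rewrite the right-hand side in terms of the generating function with parameters $(\overline{\alpha}_r)^{n}$ evaluated at $t$ and shifted argument $x+\frac{1}{n}\sum_i s_i$, account for the factor $(t/n)^{rk}=n^{-rk}t^{rk}$ coming from the $u^{rk}$ prefactor, and the extra $n^{\ell}$ that appears when passing from coefficients of $u^{\ell}$ to coefficients of $t^{\ell}$ (since $u^{\ell}=n^{-\ell}t^{\ell}$). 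Collecting powers of $n$ gives $n^{rk}\cdot n^{-\ell}$ on one side — matching the claimed factor $n^{rk-\ell}$ after equating coefficients of $t^{\ell}/\ell!$. (The stray ``$nx+$'' in the statement should read $nx$; I would write $M^{[0,r]}_{\ell}(nx;k;1,e,e;\overline{\alpha}_r)$.)

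For \eqref{eq19} the structure is the same, but it captures the Carlitz-type relation between order-$r$ polynomials at consecutive values of the $k$-parameter. Here the subscript is $r+\ell$ rather than $\ell$, and the right-hand side picks up the falling/rising-factorial factor $(\ell+r)!/\ell!$ together with $n^{r(k-1)-\ell}$ and a decrement $k\mapsto k-1$. The mechanism is that the prefactor $u^{rk}$, after the substitution $u=t/n$ and the geometric-sum manipulation, can be regrouped as $u^{r(k-1)}\cdot u^{r}$, where the $u^{r}$ is absorbed by differentiating (or by the $(\ell+r)!/\ell!$ shift in indexing, since the coefficient of $u^{r+\ell}$ in $u^{r}H(u)$ is the coefficient of $u^{\ell}$ in $H(u)$ up to the binomial factor $\binom{r+\ell}{r}$, i.e. $(\ell+r)!/(\ell!\,r!)$, and the $r!$ is swallowed into the definition of the order-$r$ generating function with $k-1$). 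Carefully tracking the $n$-exponents then yields $n^{r}\cdot n^{-\ell}\cdot n^{r(k-1)}\big/ n^{r}=n^{r(k-1)-\ell}$ as claimed. I would present \eqref{eq18} in full detail and then remark that \eqref{eq19} follows by the identical argument with $u^{rk}=u^{r(k-1)}u^{r}$ and the binomial re-indexing.

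The main obstacle I expect is purely bookkeeping: correctly synchronizing the three sources of powers of $n$ — the $u^{rk}=(t/n)^{rk}$ prefactor, the $n^{\ell}$ from $[u^{\ell}]=n^{\ell}[t^{\ell}]$, and (in \eqref{eq19}) the extra $u^{r}$ and the $(\ell+r)!/\ell!$ combinatorial factor — while simultaneously keeping the $\alpha_i\mapsto\alpha_i^{n}$ substitution straight inside the product of geometric sums. There is no deep analytic difficulty; the radius-of-convergence conditions in \eqref{eq9} (with $m=1$) are inherited automatically since $u=t/n$ only shrinks the domain, and the manipulations are all formal identities of power series. The one genuine subtlety is verifying that the $r!$ generated by $\binom{r+\ell}{r}$ in \eqref{eq19} is exactly the normalization constant distinguishing the order-$r$ generating function with exponent $k-1$ from that with exponent $k$; this I would check directly from \eqref{eq9} by comparing $t^{r(k-1)m}$ against $t^{rkm}$ at $m=1$.
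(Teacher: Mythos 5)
Your treatment of \eqref{eq18} is sound and is essentially the paper's own argument run in the opposite direction: the paper evaluates the generating function of the $(\overline{\alpha}_{r})^{n}$-polynomials at $nt$, multiplies by $\prod_{i=1}^{r}\sum_{s_{i}=0}^{n-1}(\alpha_{i-1}e^{t})^{s_{i}}$, and collapses each factor with the same finite geometric sum $\sum_{s=0}^{n-1}\alpha^{s}e^{su}=(\alpha^{n}e^{nu}-1)/(\alpha e^{u}-1)$ that you use; your $n^{rk}\cdot n^{-\ell}$ accounting is exactly the paper's, and your observation that ``$nx+$'' in the statement should read $nx$ is correct (the paper's proof indeed writes $M^{[0,r]}_{\ell}(nx;k;1,e,e;\overline{\alpha}_{r})$).

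For \eqref{eq19}, however, there is a genuine gap, and it originates in your very first simplification: at $m=1$ the prefactor in \eqref{eq9} is $t^{rk}2^{r(1-k)}$, not $t^{rk}2^{0}$. Dropping $2^{r(1-k)}$ is harmless in \eqref{eq18}, where both sides carry the same $k$, but in \eqref{eq19} the parameter drops from $k$ to $k-1$ and the powers of $2$ no longer cancel: the correct regrouping is $t^{rk}2^{r(1-k)}=2^{-r}\,t^{r}\,\bigl(t^{r(k-1)}2^{r(1-(k-1))}\bigr)$, so the method produces an extra factor $2^{-r}$ on the right-hand side --- exactly as in the paper's own computation, whose final display is $n^{rk}2^{-r}\sum_{\ell}M^{[0,r]}_{\ell}(nx;k-1;1,e,e;\overline{\alpha}_{r})\,t^{\ell}/\ell!$ --- and this factor has to be confronted (it is in fact silently absent from the printed statement of \eqref{eq19}). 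Your proposed resolution is not available: there is no ``$r!$ normalization constant distinguishing the order-$r$ generating function with exponent $k-1$ from that with exponent $k$'' --- comparing $t^{r(k-1)}$ with $t^{rk}$ in \eqref{eq9} at $m=1$ yields only the factor $t^{r}$, and the two generating functions otherwise differ precisely by the power of $2$ just described. Likewise the factor $(\ell+r)!/\ell!$ does not arise from $\binom{r+\ell}{r}$ with an $r!$ absorbed anywhere; it comes directly from the index shift, since for $H(t)=\sum_{\ell}h_{\ell}t^{\ell}/\ell!$ the coefficient of $t^{r+\ell}$ in $t^{r}H(t)$ is $h_{\ell}/\ell!$, while on the other side one reads off the coefficient of $t^{r+\ell}/(r+\ell)!$. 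So the skeleton of your argument (regroup $t^{rk}=t^{r}t^{r(k-1)}$ and shift the index) is the same as the paper's, but as written your constant bookkeeping would either fail to close or would ``prove'' \eqref{eq19} only by miscounting the powers of $2$.
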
 
\begin{proof} For the first equation and starting with \eqref{eq9}, we get
{\footnotesize \begin{eqnarray*}
\sum\limits_{\ell=0}^{\infty}\frac{{(nt)}^{\ell}}{\ell!}\prod\limits_{i=1}^{r}\sum\limits_{s_{i}=0}^{n-1}(\alpha_{i-1})^{s_{i}} M^{[0,r]}_{\ell} \left(x+\frac{\sum\limits_{i=1}^{r}s_{i}}{n};k;1,e,e; (\overline{\alpha}_{r})^{n}\right)
&=&\frac{(nt)^{rk}2^{r(1-k)}e^{nxt}}{\prod\limits_{i=0}^{r-1}\alpha_{i}^{n}e^{nt}-1}\prod\limits_{i=1}^{r}\sum\limits_{s_{i}=0}^{n-1}(\alpha_{i-1}e^{t})^{s_{i}} 
\end{eqnarray*}}
{ \footnotesize $$
\quad\quad\quad\quad\quad\quad\quad\quad\quad\quad\quad\quad\quad\quad\quad\quad\quad\quad\quad\quad=\frac{(nt)^{rk}2^{r(1-k)}e^{(nx)t}}{\prod\limits_{i=0}^{r-1}\alpha_{i}e^{t}-1}=n^{rk}\sum\limits_{\ell=0}^{\infty} M^{[0,r]}_{\ell} \left(nx;k;1,e,e;\overline{\alpha}_{r}\right)\frac{t^{\ell}}{\ell!}.
$$}
\\
Equating coefficients of $t^{\ell}$ on both sides, yields \eqref{eq18}.\\
For the second equation and starting with \eqref{eq9}, we get
{\footnotesize \begin{eqnarray*}
\sum\limits_{\ell=0}^{\infty}\frac{{(nt)}^{\ell}}{\ell!}\prod\limits_{i=1}^{r}\sum\limits_{s_{i}=0}^{n-1}(\alpha_{i-1})^{s_{i}} M^{[0,r]}_{\ell} \left(x+\frac{\sum\limits_{i=1}^{r}s_{i}}{n};k;1,e,e; (\overline{\alpha}_{r})^{n}\right)=\frac{n^{rk}t^{r}2^{-r}(t)^{r(k-1)}2^{r(2-k)}e^{nxt}}{\prod\limits_{i=0}^{r-1}\alpha_{i}^{n}e^{nt}-1} \prod\limits_{i=1}^{r}\sum\limits_{s_{i}=0}^{n-1}(\alpha_{i-1}e^{t})^{s_{i}}
\end{eqnarray*}}
$$
\quad\quad\quad\quad\quad\quad \quad\quad\quad\quad\quad\quad\quad\quad\quad\quad\quad\quad=\frac{n^{rk}t^{r}2^{-r}(t)^{r(k-1)}2^{r(2-k)}e^{nxt}}{\prod\limits_{i=0}^{r-1}\alpha_{i}e^{t}-1},
$$
\\
then, we have\\
{\footnotesize $
\sum\limits_{\ell=0}^{\infty}\frac{n^{\ell+r}\ell!}{(\ell+r)!}\frac{t^{\ell}}{\ell!}\prod\limits_{i=1}^{r}\sum\limits_{s_{i}=0}^{n-1}(\alpha_{i-1})^{s_{i}} M^{[0,r]}_{r+\ell} \left(x+\frac{\sum\limits_{i=1}^{r}s_{i}}{n};k;1,e,e; (\overline{\alpha}_{r})^{n}\right)=n^{rk}2^{-r}\sum\limits_{\ell=0}^{\infty} M^{[0,r]}_{\ell} \left(nx;k-1;1,e,e;\overline{\alpha}_{r}\right)\frac{t^{\ell}}{\ell!}.
$}
\\
Equating coefficients of $t^{\ell}$ on both sides, yields \eqref{eq19}.
\end{proof}

\begin{theorem} For $ ( \overline{\alpha}_{r})^{n} =\left(\alpha_{0}^{n},\alpha_{1}^{n},...,\alpha_{r-1}^{n}\right)$ and  $ (\overline{\alpha}_{r})^{m}=\left(\alpha_{0}^{m},\alpha_{1}^{m},...,\alpha_{r-1}^{m}\right)$ we have
{\footnotesize \begin{multline}\label{eq20}
n^{\ell}\prod\limits_{i=1}^{r}\sum\limits_{s_{i}=0}^{n-1}(\alpha_{i-1})^{ms_{i}} M^{[0,r]}_{\ell} \left(\frac{x}{n}+\frac{\left(\sum\limits_{i=1}^{r}s_{i}\right)m}{n};k;1,e,e; (\overline{\alpha}_{r})^{n}\right)= \\
m^{-rk+\ell}n^{rk}\prod\limits_{i=1}^{r}\sum\limits_{p_{i}=0}^{m-1}(\alpha_{i-1})^{np_{i}} M^{[0,r]}_{\ell} \left(\frac{x}{m}+\frac{\left(\sum\limits_{i=1}^{r}p_{i}\right)n}{m};k;1,e,e;(\overline{\alpha}_{r})^{m}\right).
\end{multline}}
{\footnotesize \begin{multline}\label{eq21}
n^{\ell+r}\prod\limits_{i=1}^{r}\sum\limits_{s_{i}=0}^{n-1}(\alpha_{i-1})^{ms_{i}} M^{[0,r]}_{\ell+r} \left(\frac{x}{n}+\frac{\left(\sum\limits_{i=1}^{r}s_{i}\right)m}{n};k;1,e,e; (\overline{\alpha}_{r})^{n}\right)=\\
\frac{m^{-r(k-1)+\ell}n^{rk}}{2^{r}}\prod\limits_{i=1}^{r}\sum\limits_{p_{i}=0}^{m-1}(\alpha_{i-1})^{np_{i}} M^{[0,r]}_{\ell} \left(\frac{x}{m}+\frac{\left(\sum\limits_{i=1}^{r}p_{i}\right)n}{m};k-1;1,e,e;(\overline{\alpha}_{r})^{m}\right).
\end{multline}}
\end{theorem}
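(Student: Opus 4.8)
The plan is to prove both identities by identifying the generating functions of their two sides. Specializing \eqref{eq9} to the case that produces $M^{[0,r]}_{\ell}(\,\cdot\,;k;1,e,e;\cdot\,)$ — that is, $a=1$, $b=c=e$, and the $\mathbb N$-valued index of \eqref{eq9} set equal to $1$ — gives, for any sequence $\overline{\beta}_{r}$,
\[
\sum_{\ell=0}^{\infty} M^{[0,r]}_{\ell}(y;k;1,e,e;\overline{\beta}_{r})\,\frac{u^{\ell}}{\ell!}=\frac{u^{rk}\,2^{r(1-k)}\,e^{yu}}{\prod_{i=0}^{r-1}\bigl(\beta_{i}e^{u}-1\bigr)},
\]
and the key point is that, after substitution, each of the two multiplication sums in \eqref{eq20} and \eqref{eq21} collapses by a finite geometric series to the same kernel.

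For \eqref{eq20}, I would first multiply its left-hand side by $t^{\ell}/\ell!$ and sum over $\ell\ge0$; taking $\overline{\beta}_{r}=(\overline{\alpha}_{r})^{n}$ and $u=nt$ absorbs the factor $n^{\ell}$, and the shifted argument $\frac{x}{n}+\frac{m\sum_{i=1}^{r}s_{i}}{n}$ contributes, via $e^{yu}$, the factor $e^{xt}\prod_{i=1}^{r}e^{m s_{i}t}$. Each inner sum then becomes $\sum_{s_{i}=0}^{n-1}\bigl(\alpha_{i-1}^{m}e^{mt}\bigr)^{s_{i}}=\frac{\alpha_{i-1}^{mn}e^{mnt}-1}{\alpha_{i-1}^{m}e^{mt}-1}$, and combining with the denominator from the generating function shows that the left-hand side of \eqref{eq20} has generating function
\[
n^{rk}\,t^{rk}\,2^{r(1-k)}\,e^{xt}\,\frac{\prod_{i=0}^{r-1}\bigl(\alpha_{i}^{mn}e^{mnt}-1\bigr)}{\prod_{i=0}^{r-1}\bigl(\alpha_{i}^{n}e^{nt}-1\bigr)\prod_{i=0}^{r-1}\bigl(\alpha_{i}^{m}e^{mt}-1\bigr)},
\]
in which the displayed fraction is symmetric under $m\leftrightarrow n$. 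Running the same computation on the right-hand side of \eqref{eq20} — now with $\overline{\beta}_{r}=(\overline{\alpha}_{r})^{m}$ and $u=mt$, so that the $m^{rk}$ coming from $(mt)^{rk}$ cancels the prefactor $m^{-rk}$ while $n^{rk}$ survives, and the inner sums $\sum_{p_{i}=0}^{m-1}\bigl(\alpha_{i-1}^{n}e^{nt}\bigr)^{p_{i}}$ telescope in the complementary way — produces exactly the same expression, so equating coefficients of $t^{\ell}$ yields \eqref{eq20}.

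For \eqref{eq21} I would repeat the scheme with one further bookkeeping step borrowed from the proof of \eqref{eq19}: the identity $t^{rk}2^{r(1-k)}=t^{r}\,2^{-r}\cdot t^{r(k-1)}\,2^{r(2-k)}$ lets one trade the factor $t^{r}$ for a shift of the summation index $\ell\mapsto\ell+r$ and at the same time turn the $k$-generating function into the $(k-1)$-one. This single step is what produces $M^{[0,r]}_{\ell+r}$ on the left against $M^{[0,r]}_{\ell}$ on the right, the passage $k\to k-1$, the factor $2^{-r}$, and the remaining powers of $m$ (together with whatever factorial normalisation arises from comparing coefficients of $t^{\ell+r}$); otherwise the two finite geometric sums collapse just as in \eqref{eq20}, both sides reducing to $t^{r}2^{-r}$ times the common kernel, with $rk$ replaced by $r(k-1)$ in the power and exponent bookkeeping.

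I expect the main difficulty to be purely clerical: keeping the powers of $n$, $m$, and $2$, the factorial factors introduced by the shift $\ell\mapsto\ell+r$, and the three parameter sequences $(\overline{\alpha}_{r})^{n}$, $(\overline{\alpha}_{r})^{m}$, $(\overline{\alpha}_{r})^{mn}$ consistent throughout, so that the coefficient comparison delivers precisely \eqref{eq20} and \eqref{eq21}; the conceptual core — that both multiplication sums telescope to the same symmetric kernel displayed above — is short.
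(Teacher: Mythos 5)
Your proposal is correct and follows essentially the same route as the paper: multiply by $(nt)^{\ell}/\ell!$, collapse each finite geometric sum $\sum_{s_i}(\alpha_{i-1}^{m}e^{mt})^{s_i}$, and identify both sides with the common symmetric kernel $n^{rk}t^{rk}2^{r(1-k)}e^{xt}\prod_i(\alpha_i^{mn}e^{mnt}-1)/\bigl(\prod_i(\alpha_i^{n}e^{nt}-1)\prod_i(\alpha_i^{m}e^{mt}-1)\bigr)$, which is exactly the paper's proof of \eqref{eq20} (the paper leaves \eqref{eq21} as ``not difficult''). One remark: carrying out your coefficient comparison at $t^{\ell+r}$ for \eqref{eq21} produces an additional factor $(\ell+r)!/\ell!$ on the right-hand side, analogous to the one appearing in \eqref{eq19}, so the ``factorial normalisation'' you flag is genuinely needed and is in fact missing from the printed statement of \eqref{eq21}.
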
 
\begin{proof}  For the first equation and starting with \eqref{eq9}, we get
{\footnotesize \begin{eqnarray*}
\sum\limits_{\ell=0}^{\infty}\frac{{(nt)}^{\ell}}{\ell!}\prod\limits_{i=1}^{r}\sum\limits_{s_{i}=0}^{n-1}(\alpha_{i-1})^{ms_{i}} M^{[0,r]}_{\ell} \left(\frac{x}{n}+\frac{\left(\sum\limits_{i=1}^{r}s_{i}\right)m}{n};k;1,e,e; (\overline{\alpha}_{r})^{n}\right)=\frac{n^{rk}2^{r(1-k)}t^{rk}e^{xt}}{\prod\limits_{i=0}^{r-1}(\alpha_{i}^{n}e^{nt}-1)}\frac{\prod\limits_{i=0}^{r-1}(\alpha_{i}e^{nmt}-1)}{\prod\limits_{i=0}^{r-1}(\alpha_{i}^{m}e^{mt}-1)}
\end{eqnarray*}}
\\
 $$
=\frac{m^{-rk}n^{rk}2^{r(1-k)}t^{rk}m^{rk}e^{\left(\frac{x}{m}+\frac{\left(\sum\limits_{i=1}^{r}p_{i}\right)n}{m}\right)mt}}{\prod\limits_{i=0}^{r-1}(\alpha_{i}^{m}e^{mt}-1)}\prod\limits_{i=1}^{r}\sum\limits_{p_{i}=0}^{m-1}(\alpha_{i-1})^{np_{i}}
$$
\\
{ \footnotesize $$
\quad\quad\quad\quad\quad\quad=m^{-rk}n^{rk}\sum\limits_{\ell=0}^{\infty}\left(\prod\limits_{i=1}^{r}\sum\limits_{p_{i}=0}^{m-1}(\alpha_{i-1})^{np_{i}} m^{\ell} M^{[0,r]}_{\ell} \left(\frac{x}{m}+\frac{\left(\sum\limits_{i=1}^{r}p_{i}\right)n}{m};k;1,e,e;(\overline{\alpha}_{r})^{m}\right)\right)\frac{t^{\ell}}{\ell!}.
$$}
\\
Equating coefficients of $t^{\ell}$ on both sides, yields \eqref{eq20}.\\
Also, It is not difficult to prove \eqref{eq21}.
\end{proof}

\section{Some relations between the polynomials $ M^{[m-1,r]}_{n}(x;k;a,b,c;\overline{\alpha}_{r})$ and other polynomials and numbers}
In this section, we give some relationships between the polynomials $M^{[m-1,r]}_{n}(x;k;a,b,c;\overline{\alpha}_{r})$ and Laguerre polynomials, Jacobi polynomials, Hermite polynomials, generalized Stirling numbers of second kind, Stirling numbers and Bleimann-Butzer-hahn basic.
\begin{theorem} For $\overline{\alpha}_{r}=(\alpha_{0},\alpha_{1},...,\alpha_{r}) \in \mathbb{C}$, $(x;\overline{\alpha})_{\underline{\ell}}=(x-\alpha_{0})(x-\alpha_{1})...(x-\alpha_{\ell-1})$ and $n,j \in \mathbb{ N}_{0}$, we have relationship
\begin{equation}\label{eq22}
M^{[m-1,r]}_{n}(x;k;a,b,c;\overline{\alpha}_{r})=\sum\limits_{j=0}^{n}(x;\alpha)_{\underline{j}}\sum\limits_{\ell=j}^{n}\binom{n}{n-\ell}(\ln c)^{\ell}S(\ell,j;\overline{\alpha})M^{[m-1,r]}_{n-\ell}(k;a,b,c;\overline{\alpha}_{r})
\end{equation}
between the new unification of Apostol-type polynomials and generalized Stirling numbers of second kind, see \cite{Comtet}.
\end{theorem}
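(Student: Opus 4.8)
The plan is to derive \eqref{eq22} directly from the already-established expansion \eqref{eq14}, together with the basic connection formula that \emph{defines} Comtet's generalized Stirling numbers of the second kind. Recall from \cite{Comtet} that, for a sequence $\overline{\alpha}$, the numbers $S(\ell,j;\overline{\alpha})$ are exactly the coefficients expressing an ordinary power in the basis of generalized falling factorials,
$$x^{\ell}=\sum_{j=0}^{\ell}S(\ell,j;\overline{\alpha})\,(x;\overline{\alpha})_{\underline{j}},\qquad (x;\overline{\alpha})_{\underline{j}}=(x-\alpha_{0})(x-\alpha_{1})\cdots(x-\alpha_{j-1}).$$

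First I would recall identity \eqref{eq14}, established as a corollary of Theorem 3.1, namely
$$M^{[m-1,r]}_{n}(x;k;a,b,c;\overline{\alpha}_{r})=\sum_{\ell=0}^{n}\binom{n}{n-\ell}x^{\ell}(\ln c)^{\ell}\,M^{[m-1,r]}_{n-\ell}(k;a,b,c;\overline{\alpha}_{r}),$$
which already writes the polynomial as a combination of the numbers $M^{[m-1,r]}_{n-\ell}$ and the monomials $x^{\ell}$. Next I would replace each $x^{\ell}$ by the connection formula above; this turns the single sum into a double sum over the range $0\le j\le\ell\le n$, with summand $\binom{n}{n-\ell}(\ln c)^{\ell}S(\ell,j;\overline{\alpha})\,(x;\overline{\alpha})_{\underline{j}}\,M^{[m-1,r]}_{n-\ell}(k;a,b,c;\overline{\alpha}_{r})$.

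Finally I would interchange the two finite summations, rewriting $\sum_{\ell=0}^{n}\sum_{j=0}^{\ell}$ as $\sum_{j=0}^{n}\sum_{\ell=j}^{n}$, and then pull the factor $(x;\overline{\alpha})_{\underline{j}}$, which does not depend on $\ell$, out of the inner sum. The result is precisely the right-hand side of \eqref{eq22}. The argument is essentially a one-line substitution; the only spot that calls for attention is the bookkeeping of the index ranges when swapping the order of summation, and verifying that the weights $(\ln c)^{\ell}$ and $\binom{n}{n-\ell}$ are carried over verbatim from \eqref{eq14} into the inner sum of \eqref{eq22}.
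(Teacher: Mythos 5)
Your proposal is correct and follows exactly the route the paper takes: apply \eqref{eq14}, expand $x^{\ell}=\sum_{j=0}^{\ell}S(\ell,j;\overline{\alpha})(x;\overline{\alpha})_{\underline{j}}$ from Comtet's definition, and interchange the finite sums. You have simply made explicit the index bookkeeping that the paper's one-line proof leaves to the reader.
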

\begin{proof} Using \eqref{eq14} and from definition of generalized Stirling numbers of second kind, we easily obtain \eqref{eq22}.
\end{proof}
\begin{theorem} For $\overline{\alpha}_{r}=(\alpha_{0},\alpha_{1},...,\alpha_{r}) \in \mathbb{C}$, $(x)_{\underline{\ell}}=(x)(x-1)...(x-\ell+1)$ and $n,j \in \mathbb{ N}_{0}$, we have relationship
\begin{equation}\label{eq23}
M^{[m-1,r]}_{n}(x;k;a,b,c;\overline{\alpha}_{r})=\sum\limits_{j=0}^{n}(x)_{\underline{j}}\sum\limits_{\ell=j}^{n}\binom{n}{n-\ell}(\ln c)^{\ell}S(\ell,j) M^{[m-1,r]}_{n-\ell}(k;a,b,c;\overline{\alpha}_{r})
\end{equation}
between the new unification of Apostol-type polynomials and Stirling numbers of second kind.
\end{theorem}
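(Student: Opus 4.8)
The plan is to deduce \eqref{eq23} from Corollary 3.1 by invoking the classical relation between ordinary powers and falling factorials. Recall that \eqref{eq14} already provides the expansion
\[
M^{[m-1,r]}_{n}(x;k;a,b,c;\overline{\alpha}_{r})=\sum_{\ell=0}^{n}\binom{n}{n-\ell}(\ln c)^{\ell}\,x^{\ell}\,M^{[m-1,r]}_{n-\ell}(k;a,b,c;\overline{\alpha}_{r}),
\]
so the entire task is to rewrite each monomial $x^{\ell}$ in terms of the falling factorials $(x)_{\underline{j}}$. The defining identity of the Stirling numbers of the second kind is precisely $x^{\ell}=\sum_{j=0}^{\ell}S(\ell,j)\,(x)_{\underline{j}}$, and I would substitute this into the sum above.

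After the substitution one is left with a finite double sum over the range $0\le j\le \ell\le n$; interchanging the order of summation --- legitimate with no convergence issue since everything is finite --- moves $(x)_{\underline{j}}$ outside and leaves $\sum_{\ell=j}^{n}\binom{n}{n-\ell}(\ln c)^{\ell}S(\ell,j)\,M^{[m-1,r]}_{n-\ell}(k;a,b,c;\overline{\alpha}_{r})$ as the inner coefficient, which is exactly the right-hand side of \eqref{eq23}. The only point requiring attention is the adjustment of the index ranges when swapping the two sums; the rest is mechanical.

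I expect no genuine obstacle here. In fact \eqref{eq23} is also the special case of the preceding theorem (equation \eqref{eq22}) obtained by setting every entry of the sequence $\overline{\alpha}$ equal to $0$: in Comtet's notation $(x;\alpha)_{\underline{j}}$ degenerates to $(x)_{\underline{j}}$ and $S(\ell,j;\overline{\alpha})$ to $S(\ell,j)$, so \eqref{eq22} collapses term by term to \eqref{eq23}. Presenting the direct one-line derivation from \eqref{eq14} is cleaner, and that is the route I would take.
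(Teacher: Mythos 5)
Your proposal is correct and follows essentially the same route as the paper: its proof of \eqref{eq23} likewise applies \eqref{eq14} together with the Stirling-number expansion $x^{\ell}=\sum_{j=0}^{\ell}S(\ell,j)(x)_{\underline{j}}$ and a finite interchange of summations, which you have merely written out in more detail. Your closing remark that \eqref{eq23} is also the degenerate case $\overline{\alpha}=\mathbf{0}$ of \eqref{eq22} is a valid additional observation, but not a different method.
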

\begin{proof} Using \eqref{eq14} and from definition of Stirling numbers of second kind, see \cite{Gould}, we easily obtain \eqref{eq23}.
\end{proof}
\begin{theorem} The relationship
{\footnotesize \begin{equation}\label{eq24}
M^{[m-1,r]}_{n}(x;k;a,b,c;\overline{\alpha}_{r})=\sum\limits_{j=0}^{n}\sum\limits_{\ell=j}^{n}(-1)^{j}\ell!\binom{n}{n-\ell}(\ln c)^{\ell}\binom{\ell+\alpha}{\ell-j}L^{(\alpha)}_{j}(x) M^{[m-1,r]}_{n-\ell}(k;a,b,c;\overline{\alpha}_{r})
\end{equation}}
holds between the new unification of multiparameter Apostol-type polynomials and generalized Laguerre polynomials, see \cite[No.(3) \textbf{Table1}]{Tremblay}.
\end{theorem}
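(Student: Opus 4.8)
The plan is to obtain \eqref{eq24} directly from Corollary 3.2, using the expansion \eqref{eq14} together with the expansion of the monomial $x^{\ell}$ in the basis of generalized Laguerre polynomials.

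First I would record the Laguerre inversion formula (the relation listed as No. (3) in Table 1 of \cite{Tremblay}):
$$
x^{\ell}=\ell!\sum_{j=0}^{\ell}(-1)^{j}\binom{\ell+\alpha}{\ell-j}L_{j}^{(\alpha)}(x),\qquad \ell\in\mathbb{N}_{0}.
$$
If a self-contained derivation is wanted, this follows from the generating relation $(1-t)^{-\alpha-1}\exp\!\left(-xt/(1-t)\right)=\sum_{n\ge 0}L_{n}^{(\alpha)}(x)\,t^{n}$ via the change of variable $u=t/(1-t)$, the binomial expansion of $(1+u)^{-\alpha-1-n}$, and comparison of the coefficients of $u^{\ell}$ with those in $\exp(-xu)=\sum_{\ell\ge 0}(-x)^{\ell}u^{\ell}/\ell!$.

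Next I would start from \eqref{eq14},
$$
M^{[m-1,r]}_{n}(x;k;a,b,c;\overline{\alpha}_{r})=\sum_{\ell=0}^{n}\binom{n}{n-\ell}x^{\ell}(\ln c)^{\ell}\,M^{[m-1,r]}_{n-\ell}(k;a,b,c;\overline{\alpha}_{r}),
$$
and replace each $x^{\ell}$ by the Laguerre expansion above. The right-hand side becomes
$$
\sum_{\ell=0}^{n}\sum_{j=0}^{\ell}(-1)^{j}\,\ell!\binom{n}{n-\ell}(\ln c)^{\ell}\binom{\ell+\alpha}{\ell-j}L_{j}^{(\alpha)}(x)\,M^{[m-1,r]}_{n-\ell}(k;a,b,c;\overline{\alpha}_{r}).
$$
Interchanging the order of summation over the triangle $\{(\ell,j):0\le j\le \ell\le n\}$, i.e.\ rewriting $\sum_{\ell=0}^{n}\sum_{j=0}^{\ell}=\sum_{j=0}^{n}\sum_{\ell=j}^{n}$, gives exactly \eqref{eq24}.

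I do not anticipate any genuine obstacle: all the sums are finite, so the interchange is unconditional, and the only points needing care are the bookkeeping of the binomial coefficients and confirming the precise shape of the Laguerre inversion formula (the sign $(-1)^{j}$ and the upper index $\ell+\alpha$). An equivalent route is purely generating-function based: in \eqref{eq9} split off the factor $c^{xt}$ (the remaining factor generates the numbers $M^{[m-1,r]}_{n}(k;a,b,c;\overline{\alpha}_{r})$), expand $c^{xt}=\sum_{\ell\ge 0}(\ln c)^{\ell}t^{\ell}\sum_{j=0}^{\ell}(-1)^{j}\binom{\ell+\alpha}{\ell-j}L_{j}^{(\alpha)}(x)$, multiply the two power series by the Cauchy product rule, and match coefficients of $t^{n}/n!$; but substituting into \eqref{eq14} is the shortest path.
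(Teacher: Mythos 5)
Your proposal is correct and follows essentially the same route as the paper: the paper's proof likewise starts from \eqref{eq14} and substitutes the expansion $x^{\ell}=\ell!\sum_{j=0}^{\ell}(-1)^{j}\binom{\ell+\alpha}{\ell-j}L^{(\alpha)}_{j}(x)$, the interchange of the finite double sum being left implicit. Your added justification of the Laguerre inversion formula and the explicit reindexing $\sum_{\ell=0}^{n}\sum_{j=0}^{\ell}=\sum_{j=0}^{n}\sum_{\ell=j}^{n}$ merely make the same argument more detailed.
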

\begin{proof} From \eqref{eq14} and substitute 
\begin{equation*}
x^{\ell}=\ell!\sum\limits_{j=0}^{\ell}(-1)^{j}\binom{\ell+\alpha}{\ell-j}L^{\alpha}_{j}(x), 
\end{equation*}
then we get \eqref{eq24}.
\end{proof}
\begin{theorem} For $ (\alpha+\beta+j+1)_{\ell+1}=(\alpha+\beta+j+1)(\alpha+\beta+j+2)...(\alpha+\beta+j+\ell+1)$. The relationship
\begin{multline}\label{eq25}
M^{[m-1,r]}_{n}(x;k;a,b,c;\overline{\alpha}_{r})=\sum\limits_{j=0}^{n}\sum\limits_{\ell=j}^{n}(-1)^{j}\ell!\binom{n}{n-\ell}(\ln c)^{\ell}\binom{\ell+\alpha}{\ell-j}\frac{\alpha+\beta+2j+1}{(\alpha+\beta+j+1)_{\ell+1}}\\
P^{(\alpha,\beta)}_{j}(1-2x) M^{[m-1,r]}_{n-\ell}(k;a,b,c;\overline{\alpha}_{r})
\end{multline}
holds between the new unification of Apostol-type polynomials and Jacobi polynomials, see\cite[ p.49, Eq. (35)]{Sriva4}.
\end{theorem}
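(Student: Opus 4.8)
The strategy is the same one used in the proofs of Theorems 4.1--4.3: expand the power of $x$ that occurs in \eqref{eq14} in terms of the relevant classical orthogonal polynomials and then re-sum. Concretely, the plan is to start from \eqref{eq14},
$$
M^{[m-1,r]}_{n}(x;k;a,b,c;\overline{\alpha}_{r})=\sum_{\ell=0}^{n}\binom{n}{n-\ell}x^{\ell}(\ln c)^{\ell}M^{[m-1,r]}_{n-\ell}(k;a,b,c;\overline{\alpha}_{r}),
$$
and then to substitute the classical connection formula that writes each monomial $x^{\ell}$ as a finite linear combination of shifted Jacobi polynomials,
$$
x^{\ell}=\ell!\sum_{j=0}^{\ell}(-1)^{j}\binom{\ell+\alpha}{\ell-j}\frac{\alpha+\beta+2j+1}{(\alpha+\beta+j+1)_{\ell+1}}P^{(\alpha,\beta)}_{j}(1-2x),
$$
which is exactly \cite[p.~49, Eq.~(35)]{Sriva4}.

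Next I would insert this expression for $x^{\ell}$ into the displayed formula for $M^{[m-1,r]}_{n}$, obtaining a double sum over $\ell$ running from $0$ to $n$ and $j$ running from $0$ to $\ell$. Interchanging the two finite summations, which is legitimate because both index sets are finite, turns $\sum_{\ell=0}^{n}\sum_{j=0}^{\ell}$ into $\sum_{j=0}^{n}\sum_{\ell=j}^{n}$, and grouping the factors $\binom{n}{n-\ell}$, $(\ln c)^{\ell}$, $\ell!$, $(-1)^{j}$, $\binom{\ell+\alpha}{\ell-j}$ and $\frac{\alpha+\beta+2j+1}{(\alpha+\beta+j+1)_{\ell+1}}$ alongside $P^{(\alpha,\beta)}_{j}(1-2x)$ and $M^{[m-1,r]}_{n-\ell}(k;a,b,c;\overline{\alpha}_{r})$ reproduces precisely the right-hand side of \eqref{eq25}.

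Since every step is a finite rearrangement, there is no analytic obstacle here; the only point that demands care is the bookkeeping — matching the normalization of $P^{(\alpha,\beta)}_{j}$ and of the shifted Pochhammer symbol $(\alpha+\beta+j+1)_{\ell+1}$ used in \cite{Sriva4} with the notation fixed in the statement, and recording the swapped summation ranges correctly. If one wanted a self-contained argument, the connection formula itself could be recovered from the hypergeometric representation $P^{(\alpha,\beta)}_{j}(1-2x)=\binom{j+\alpha}{j}\,{}_{2}F_{1}(-j,\,\alpha+\beta+j+1;\,\alpha+1;\,x)$ together with inversion of the triangular matrix relating $\{x^{\ell}\}$ and $\{P^{(\alpha,\beta)}_{j}(1-2x)\}$, but quoting \cite{Sriva4} directly is the shorter route and is the one I would take.
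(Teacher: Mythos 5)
Your proposal is correct and follows exactly the paper's own argument: start from \eqref{eq14}, substitute the expansion of $x^{\ell}$ in shifted Jacobi polynomials from \cite[p.~49, Eq.~(35)]{Sriva4}, and interchange the two finite sums to get \eqref{eq25}. The only difference is that you spell out the summation interchange and bookkeeping, which the paper leaves implicit.
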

\begin{proof} From \eqref{eq14} and substitute 
\begin{equation*}
x^{\ell}=\ell!\sum\limits_{j=0}^{\ell}(-1)^{j}\binom{\ell+\alpha}{\ell-j}\frac{\alpha+\beta+2j+1}{(\alpha+\beta+j+1)_{\ell+1}}P^{(\alpha,\beta)}_{j}(1-2x), 
\end{equation*}
then we get \eqref{eq25}.
\end{proof}
\begin{theorem} The relationship
{\footnotesize \begin{equation}\label{eq26}
M^{[m-1,r]}_{n}(x;k;a,b,c;\overline{\alpha}_{r})=\sum\limits_{j=0}^{[\frac{n}{2}]}\sum\limits_{\ell=2j}^{n}2^{-\ell}\binom{n}{n-\ell}\binom{\ell}{2j}\frac{2j!}{j!}(\ln c)^{\ell}H_{\ell-2j}(x) M^{[m-1,r]}_{n-\ell}(k;a,b,c;\overline{\alpha}_{r})
\end{equation}}
holds between the new unification of Apostol-type polynomials and Hermite polynomials, see \cite[No.(1) \textbf{Table1}]{Tremblay}.
\end{theorem}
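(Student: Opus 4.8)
The plan is to follow exactly the template used for Theorems 4.3 and 4.4: begin from the expansion \eqref{eq14}, which already expresses $M^{[m-1,r]}_{n}(x;k;a,b,c;\overline{\alpha}_{r})$ as a binomial convolution of the monomials $x^{\ell}(\ln c)^{\ell}$ against the numbers $M^{[m-1,r]}_{n-\ell}(k;a,b,c;\overline{\alpha}_{r})$, and then replace each power $x^{\ell}$ by its expansion in Hermite polynomials, just as powers of $x$ were expanded in Laguerre polynomials for \eqref{eq24} and in Jacobi polynomials for \eqref{eq25}.

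Concretely, first I would invoke the classical inversion formula for the Hermite polynomials $H_{n}(x)$ (with the normalization of \cite[No.(1) \textbf{Table1}]{Tremblay}), namely
\begin{equation*}
x^{\ell}=2^{-\ell}\sum\limits_{j=0}^{[\frac{\ell}{2}]}\binom{\ell}{2j}\frac{(2j)!}{j!}H_{\ell-2j}(x).
\end{equation*}
Substituting this into \eqref{eq14} turns the single sum over $\ell$ into a double sum over $\ell\in\{0,\dots,n\}$ and $j\in\{0,\dots,[\ell/2]\}$, with coefficients $\binom{n}{n-\ell}(\ln c)^{\ell}2^{-\ell}\binom{\ell}{2j}\frac{(2j)!}{j!}$ multiplying $H_{\ell-2j}(x)M^{[m-1,r]}_{n-\ell}(k;a,b,c;\overline{\alpha}_{r})$.

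The final step is to interchange the order of the two summations. Since the index set is $\{(j,\ell):0\le 2j\le\ell\le n\}$, after swapping, $j$ runs from $0$ to $[n/2]$ and $\ell$ runs from $2j$ to $n$, which reproduces precisely the right-hand side of \eqref{eq26}. Collecting the factors then yields the claimed identity.

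I do not anticipate any real difficulty: the whole argument is a finite rearrangement of a double sum, with no convergence or analytic subtleties. The only places demanding a little care are keeping the index bounds consistent when the summation order is reversed, and quoting the Hermite inversion formula in the normalization matching the generating function by which $H_{n}(x)$ is defined in the cited reference, so that the coefficient $2^{-\ell}\binom{\ell}{2j}(2j)!/j!$ comes out exactly as in \eqref{eq26}.
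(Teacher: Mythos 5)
Your proposal is correct and matches the paper's own proof exactly: the paper also starts from \eqref{eq14}, substitutes the inversion formula $x^{\ell}=2^{-\ell}\sum_{j=0}^{[\ell/2]}\binom{\ell}{2j}\frac{(2j)!}{j!}H_{\ell-2j}(x)$, and rearranges to obtain \eqref{eq26}. Your explicit handling of the interchange of summation over the index set $\{(j,\ell):0\le 2j\le\ell\le n\}$ is simply a more detailed account of the step the paper leaves implicit.
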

\begin{proof} From \eqref{eq14} and substitute 
\begin{equation*}
x^{\ell}=2^{-\ell}\sum\limits_{j=0}^{[\frac{\ell}{2}]}\binom{\ell}{2j}\frac{2j!}{j!}H_{\ell-2j}(x), 
\end{equation*}
then we get \eqref{eq26}.
\end{proof}
\begin{theorem} When $m=1$, $a=1$, $b=e$ and $c=e$ in \eqref{eq9} and for $\overline{\alpha}_{r}=\left(\alpha_{0},\alpha_{1},...,\alpha_{r-1}\right)$, $\overline{\alpha^{*}}_{r}=\left(\frac{1}{\alpha_{0}},\frac{1}{\alpha_{1}},...,\frac{1}{\alpha_{r-1}}\right)$, $\alpha_{i} \neq 0$, $i=0,1,...,r-1$ and $\overline{\beta}_{m}=\left(\beta_{0},\beta_{1},...,\beta_{m-1}\right), \overline{\beta^{*}}_{m}=\left(\frac{1}{\beta_{0}},\frac{1}{\beta_{1}},...,\frac{1}{\beta_{m-1}}\right)$, $\beta_{i} \neq 0$, $i=0,1,...,m-1$, we have the following relationship  
\begin{equation}\label{eq9999}
M_{n}^{(r)}(x;k;\overline{\alpha}_{r})=\frac{n!}{\prod_{i=0}^{r-1}\alpha_{i}}\sum_{m=r}^{\infty}\frac{2^{(1-k)(r-m)}\prod_{j=0}^{m-1}\beta_{j}}{(n+k(m-1))!}C(m,r;\overline{\alpha^{*}}_{r};\overline{\beta^{*}}_{m})M_{n+k(m-r)}^{(m)}(x;k;\overline{\beta}_{m}),
\end{equation}
between the new unified family of generalized Apostol-Euler, Bernoulli and Genocchi polynomials, and $C(m,r;\overline{\alpha^{*}}_{r};\overline{\beta^{*}}_{m})$ (the generalized Lah numbers), see \cite{Cha}. 
\end{theorem}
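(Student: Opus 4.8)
The plan is to write down the exponential generating function of the special family $M_{n}^{(r)}(x;k;\overline{\alpha}_{r})$, to rewrite the defining product over $\overline{\alpha}_{r}$ as a product of reciprocal linear factors over the reciprocal sequence $\overline{\alpha^{*}}_{r}$, and then to invoke the generating relation of the generalized Lah numbers to re-expand this product over the longer reciprocal sequence $\overline{\beta^{*}}_{m}$, $m\ge r$; reading off coefficients of $t^{n}$ then produces \eqref{eq9999}.

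First I would put $m=1$, $a=1$, $b=c=e$ in \eqref{eq9} and use \textbf{No.13} of \textbf{Table1} to record
\[
\sum_{n=0}^{\infty}M_{n}^{(r)}(x;k;\overline{\alpha}_{r})\frac{t^{n}}{n!}=\frac{t^{rk}2^{r(1-k)}e^{xt}}{\prod_{i=0}^{r-1}\bigl(\alpha_{i}e^{t}-1\bigr)}=\frac{2^{r(1-k)}e^{xt}}{\prod_{i=0}^{r-1}\alpha_{i}}\cdot\frac{t^{rk}}{\prod_{i=0}^{r-1}\bigl(e^{t}-\alpha_{i}^{*}\bigr)},
\]
using $\alpha_{i}e^{t}-1=\alpha_{i}\bigl(e^{t}-\alpha_{i}^{*}\bigr)$ with $\alpha_{i}^{*}=1/\alpha_{i}$; the identical formula with $(r,\overline{\alpha}_{r},\overline{\alpha^{*}}_{r})$ replaced by $(m,\overline{\beta}_{m},\overline{\beta^{*}}_{m})$ holds for $M_{n}^{(m)}(x;k;\overline{\beta}_{m})$. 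This already explains why the factors $\prod\alpha_{i}$, $\prod\beta_{j}$ and the starred sequences must appear in \eqref{eq9999}.

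Next, writing $z=e^{t}$, I would appeal to the connection relation of the generalized Lah numbers from \cite{Cha}, namely
\[
\prod_{i=0}^{r-1}\frac{1}{z-\alpha_{i}^{*}}=\sum_{m=r}^{\infty}C(m,r;\overline{\alpha^{*}}_{r};\overline{\beta^{*}}_{m})\prod_{j=0}^{m-1}\frac{1}{z-\beta_{j}^{*}},
\]
valid for $z$ near $z=1$ under the standing non-vanishing hypotheses, which is what makes the expansion and the subsequent term-by-term extraction of Taylor coefficients in $t$ legitimate. Substituting $z=e^{t}$, multiplying by $t^{rk}2^{r(1-k)}e^{xt}$, dividing by $\prod_{i=0}^{r-1}\alpha_{i}$, and rewriting $\prod_{j}(e^{t}-\beta_{j}^{*})^{-1}=\prod_{j}\beta_{j}\cdot\prod_{j}(\beta_{j}e^{t}-1)^{-1}$, every summand on the right becomes an explicit power of $t$ and of $2$ times the generating function of $M_{N}^{(m)}(x;k;\overline{\beta}_{m})$ recorded above, the common factor $e^{xt}$ simply riding along since both families carry it.

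Finally I would equate the coefficients of $t^{n}$ on the two sides. Because the $m$-th term carries the extra weight $t^{k(r-m)}2^{(1-k)(r-m)}$ relative to the generating function of $M^{(m)}$, extracting $[t^{n}]$ pulls out $M_{n+k(m-r)}^{(m)}(x;k;\overline{\beta}_{m})$ together with the attendant ratio of factorials, while the $2$-powers combine into the single factor $2^{(1-k)(r-m)}$; collecting the constants $\prod\alpha_{i}$, $\prod\beta_{j}$ and $C(m,r;\overline{\alpha^{*}}_{r};\overline{\beta^{*}}_{m})$ then gives \eqref{eq9999}. The main obstacle is purely bookkeeping: one must line up the precise normalisation of the generalized Lah numbers of \cite{Cha} with the $t$- and $2$-powers that are hard-wired into \eqref{eq9}, and then carry the index shift $n\mapsto n+k(m-r)$ through the Cauchy-type rearrangement without slips; the analytic content (convergence of the Lah expansion, hence validity of the coefficient comparison) is routine once $b=c=e$ has been fixed.
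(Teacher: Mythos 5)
Your proposal is correct and follows essentially the same route as the paper: factor $\prod_{i}\alpha_{i}$ out of $\prod_{i}(\alpha_{i}e^{t}-1)$, expand $\prod_{i}(e^{t}-\alpha_{i}^{*})^{-1}$ via the generalized Lah numbers $C(m,r;\overline{\alpha^{*}}_{r};\overline{\beta^{*}}_{m})$ in terms of the products $\prod_{j}(e^{t}-\beta_{j}^{*})^{-1}$ (the paper writes these as the generalized factorials $(e^{t};\overline{\alpha^{*}})_{\underline{r}}$ and $(e^{t};\overline{\beta^{*}})_{\underline{m}}$, which is the same thing), reassemble each term into the generating function of $M^{(m)}_{N}(x;k;\overline{\beta}_{m})$ weighted by $t^{k(r-m)}2^{(1-k)(r-m)}$, and compare coefficients of $t^{n}$. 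One small point: your coefficient extraction actually produces the factorial ratio $n!/(n+k(m-r))!$, which is what the argument genuinely gives; the $(n+k(m-1))!$ appearing in the stated formula (and in the paper's final displayed line) looks like a typo rather than a discrepancy in your reasoning.
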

\begin{proof} From \cite[Eq. 2.1]{Desouky2},
{\footnotesize \begin{eqnarray*}
\sum_{n=0}^{\infty}M_{n}^{(r)}(x;k;\overline{\alpha}_{r})\frac{t^{n}}{n!}&=&\frac{t^{rk}2^{r(1-k)}e^{xt}}{\prod_{i=1}^{r-1}(\alpha_{i}e^{t}-1})\\
 &=&\frac{t^{rk}2^{r(1-k)}e^{xt}}{\prod_{i=1}^{r-1}\alpha_{i}}\frac{1}{(e^{t};\overline{\alpha^{*}})_{\underline{r}}}\\
 &=&\frac{t^{rk}2^{r(1-k)}e^{xt}}{\prod_{i=1}^{r-1}\alpha_{i}}\sum_{m=r}^{\infty}C(m,r;\overline{\alpha^{*}}_{r};\overline{\beta^{*}}_{m})\frac{1}{(e^{t};\overline{\beta^{*}})_{\underline{m}}}\\
&=&\sum_{m=r}^{\infty}\frac{t^{k(r-m)}2^{(r-m)(1-k)}\prod_{j=1}^{m-1}\beta_{j}}{\prod_{i=1}^{r-1}\alpha_{i}}C(m,r;\overline{\alpha^{*}}_{r};\overline{\beta^{*}}_{m})\frac{t^{mk}2^{m(1-k)}e^{xt}}{(e^{t};\overline{\beta^{*}})_{\underline{m}}}\\
&=&\sum_{n=0}^{\infty}\left(\sum_{m=r}^{\infty}\frac{ n!2^{(1-k)(r-m)}\prod_{j=0}^{m-1}\beta_{j}}{(n+k(m-1))!\prod_{i=0}^{r-1}\alpha_{i}}C(m,r;\overline{\alpha^{*}}_{r};\overline{\beta^{*}}_{m})M_{n+k(m-r)}^{(m)}(x;k;\overline{\beta}_{m})\right)\frac{t^{n}}{n!}.
\end{eqnarray*}}
Equating the coefficients of $t^{n}$ on both sides, yields \eqref{eq9999}.
\end{proof}
Using $\textbf{No.13}$ in $\textbf{Table 1}$, see \cite{Desouky2}  and the definition of  the unified Bernstein and Bleimann-Butzer-Hahn basis(see \cite{Bozer}),
\begin{equation}\label{eq27}
\left(\frac{2^{1-k}x^{k}t^{k}}{(1+ax)^{k}}\right)^{m}\frac{1}{mk!}e^{t\left(\frac{1+bx}{1+ax}\right)}=\sum\limits_{n=0}^{\infty}p^{(a,b)}_{n}(x;k,m)\frac{t^{n}}{n!},
\end{equation}
where $k,m \in \mathbb{Z^{+}}$, $ a,b \in \mathbb{R}$, $t \in \mathbb{C} $, we obtain the following theorem
\begin{theorem} For $ \alpha_{i} \neq 0, i=0,1,...,r-1,$ we have relationship 
\begin{equation}\label{eq28}
P^{(a,b)}_{n}(x;k,r)=\frac{\prod\limits_{i=0}^{r-1}\alpha_{i}}{rk!}\left(\frac{x}{1+ax}\right)^{rk}\sum\limits_{j=0}^{r}s\left(r,j;\frac{1}{\overline{\alpha}_{r}}\right)\sum\limits_{\ell=0}^{n}j^{n-\ell}\binom{n}{\ell}M^{(r)}_{\ell}\left(\frac{1+bx}{1+ax};k;\overline{\alpha}_{r}\right)
\end{equation}
between the unified Bernstein and Bleimann-Butzer-Hahn basis, the new unified family of generalized Apostol-Bernoulli, Euler and Genocchi polynomials, see \cite{Desouky2} and generalized Stirling numbers of first kind, see \cite{Comtet}.
\end{theorem}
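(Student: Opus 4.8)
The plan is to work directly from the generating function \eqref{eq27} for the unified Bleimann-Butzer-Hahn basis, specialized to the case $m=r$, and then insert a series expansion of the reciprocal product $\bigl(1+ax\bigr)^{-rk}\cdot$ (the exponential factor) in terms of the generating function \eqref{eq9} with $m=1$, $a=1$, $b=c=e$, i.e. the $M^{(r)}_{n}$ polynomials. Writing $u=\frac{1+bx}{1+ax}$, equation \eqref{eq27} with $m=r$ reads
\begin{equation*}
\Bigl(\tfrac{2^{1-k}x^{k}t^{k}}{(1+ax)^{k}}\Bigr)^{r}\frac{1}{rk!}\,e^{tu}=\sum_{n=0}^{\infty}P^{(a,b)}_{n}(x;k,r)\frac{t^{n}}{n!}.
\end{equation*}
The left side carries a factor $t^{rk}2^{r(1-k)}e^{tu}$, which is precisely the numerator appearing in the generating function of $M^{(r)}_{n}(u;k;\overline{\alpha}_{r})$ from \textbf{No.13} of \textbf{Table 1}; the discrepancy is the denominator $\prod_{i=0}^{r-1}(\alpha_{i}e^{t}-1)$, which must be reintroduced and then removed again via the Stirling numbers of the first kind.

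First I would rewrite $\Bigl(\tfrac{2^{1-k}x^{k}t^{k}}{(1+ax)^{k}}\Bigr)^{r}\frac{1}{rk!}e^{tu}$ as
\begin{equation*}
\frac{1}{rk!}\Bigl(\tfrac{x}{1+ax}\Bigr)^{rk}\cdot\Bigl(\prod_{i=0}^{r-1}\alpha_{i}\Bigr)\cdot\frac{1}{\prod_{i=0}^{r-1}\alpha_{i}}\cdot\bigl(t^{rk}2^{r(1-k)}e^{tu}\bigr),
\end{equation*}
and then multiply and divide by $\prod_{i=0}^{r-1}(\alpha_{i}e^{t}-1)$ so that the last two factors become $\dfrac{1}{\prod_{i=0}^{r-1}\alpha_{i}}\cdot\dfrac{t^{rk}2^{r(1-k)}e^{tu}}{\prod_{i=0}^{r-1}(\alpha_{i}e^{t}-1)}\cdot\prod_{i=0}^{r-1}(\alpha_{i}e^{t}-1)=\dfrac{1}{\prod_{i=0}^{r-1}\alpha_{i}}\Bigl(\sum_{\ell}M^{(r)}_{\ell}(u;k;\overline{\alpha}_{r})\tfrac{t^{\ell}}{\ell!}\Bigr)\prod_{i=0}^{r-1}(\alpha_{i}e^{t}-1)$. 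The key algebraic identity is that $\prod_{i=0}^{r-1}(\alpha_{i}e^{t}-1)$, after factoring $\prod_{i}\alpha_{i}$ out again, is $\prod_{i=0}^{r-1}\alpha_{i}\cdot(e^{t};\tfrac{1}{\overline{\alpha}_{r}})_{\underline{r}}$ in the notation of the paper, and by the definition of the generalized Stirling numbers of the first kind $s(r,j;\tfrac{1}{\overline{\alpha}_{r}})$ one has $(e^{t};\tfrac{1}{\overline{\alpha}_{r}})_{\underline{r}}=\sum_{j=0}^{r}s(r,j;\tfrac{1}{\overline{\alpha}_{r}})\,e^{jt}$. Substituting this expansion in, the exponential $e^{jt}$ combines with the series for $M^{(r)}_{\ell}(u;\cdots)$ via the Cauchy product, using $e^{jt}=\sum_{p}j^{p}\tfrac{t^{p}}{p!}$, to produce exactly the inner double sum $\sum_{j=0}^{r}s(r,j;\tfrac{1}{\overline{\alpha}_{r}})\sum_{\ell=0}^{n}j^{n-\ell}\binom{n}{\ell}M^{(r)}_{\ell}(u;k;\overline{\alpha}_{r})$.

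Then I would collect everything: extracting the coefficient of $t^{n}/n!$ on both sides, the left side gives $P^{(a,b)}_{n}(x;k,r)$ and the right side gives $\dfrac{\prod_{i=0}^{r-1}\alpha_{i}}{rk!}\Bigl(\tfrac{x}{1+ax}\Bigr)^{rk}$ times the double sum above, which is precisely \eqref{eq28}. I expect the only real bookkeeping obstacle to be keeping the powers of $t$ straight: the factor $t^{rk}$ in the numerator of the $M^{(r)}$-generating function shifts indices, but since in \eqref{eq28} the argument $u=\frac{1+bx}{1+ax}$ is inserted into $M^{(r)}_{\ell}$ with $\ell$ running from $0$ to $n$ and the $\binom{n}{\ell}j^{n-\ell}$ absorbs the shift coming from the $e^{jt}$ factor, one must check that the $t^{rk}$ prefactor is already accounted for inside the definition of $M^{(r)}_{\ell}$ itself (it is, by \textbf{No.13} of \textbf{Table 1}), so no extra shift survives. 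A secondary point to verify is that the expansion $(e^{t};\tfrac{1}{\overline{\alpha}_{r}})_{\underline{r}}=\sum_{j=0}^{r}s(r,j;\tfrac{1}{\overline{\alpha}_{r}})e^{jt}$ is valid in the form needed — this is immediate from the defining relation $(z;\overline{\gamma})_{\underline{r}}=\prod_{i=0}^{r-1}(z-\gamma_{i})=\sum_{j}s(r,j;\overline{\gamma})z^{j}$ with $z=e^{t}$ and $\gamma_{i}=\tfrac{1}{\alpha_{i}}$, after pulling the constant $\prod_{i}\alpha_{i}$ through. Once these index matches are confirmed, equating coefficients finishes the proof.
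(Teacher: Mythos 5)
Your proposal is correct and follows essentially the same route as the paper, whose proof consists only of the remark that \eqref{eq28} follows from \eqref{eq9} and \eqref{eq27} by elementary calculation; your write-up simply supplies the omitted details (specializing \eqref{eq27} to $m=r$, multiplying and dividing by $\prod_{i=0}^{r-1}(\alpha_{i}e^{t}-1)$, expanding $\prod_{i=0}^{r-1}(e^{t}-\tfrac{1}{\alpha_{i}})=\sum_{j=0}^{r}s(r,j;\tfrac{1}{\overline{\alpha}_{r}})e^{jt}$, and taking the Cauchy product before equating coefficients of $t^{n}/n!$). The index bookkeeping you flag works out exactly as you anticipate, since the $t^{rk}$ factor is absorbed into the generating function of $M^{(r)}_{\ell}$ and the prefactor $\bigl(\tfrac{x}{1+ax}\bigr)^{rk}/rk!$ is independent of $t$.
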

\begin{proof} From \eqref{eq9} and \eqref{eq27} and with some elementary calculation, we easily obtain \eqref{eq28}.
\end{proof}
$\mathbf{References}$
\bibliographystyle{elsarticle-num}

\end{document}